\theoremstyle{plain}
\newtheorem{Theorem}{Theorem}[section]
\newtheorem{Lemma}[Theorem]{Lemma}
\newtheorem{Corollary}[Theorem]{Corollary}
\newtheorem{Proposition}[Theorem]{Proposition}
\theoremstyle{definition}
\newtheorem{Notation}[Theorem]{Notation}
\theoremstyle{remark}
\newtheorem{Remark}[Theorem]{Remark}
\newcounter{condition}
\renewcommand{\thecondition}{C\arabic{condition}}
\newenvironment{Condition}{\begin{trivlist}\refstepcounter{condition}\item[ \textbf{Condition}~\textbf{\thecondition.}]\itshape}{\end{trivlist}}
\newcounter{assumption}
\renewcommand{\theassumption}{A\arabic{assumption}}
\newcommand{\dd}[1]{{\operatorname{d}}#1}
\newcommand{\R}{\mathds{R}}
\newcommand{\E}{\mathds{E}}
\newcommand{\N}{\mathds{N}}
\newcommand{\1}{\mathds{1}}
\newcommand{\Co}{\mathcal{C}}
\newcommand{\Prob}{\mathds{P}}
\newcommand{\norm}[1]{\left\lVert #1 \right\rVert}
\newcommand{\abs}[1]{\left| #1 \right|}
\DeclareMathOperator{\sgn}{sgn}
\title{On the Strong Feller Property and Well-Posedness for SDEs with Functional, Locally Unbounded Drift}
\author{ Stefan Bachmann\\  {\small Email: bachmann@math.uni-leipzig.de} \\ \multicolumn{1}{p{.55\textwidth}}{\normalsize\centering\emph{Institut f\"ur Mathematik, Universit\"at Leipzig, Augustusplatz 10, 04109 Leipzig, Germany}}}
\begin{document}
	\maketitle
	\begin{abstract}
		\textbf{Abstract}. We study functional stochastic differential equations with a locally unbounded, functional drift focusing on well-posedness, stability and the strong Feller property. Following the non-functional case, we consider integrability conditions and only need minimal continuity assumptions. Our approach is mainly based on Zvonkin's transformation \cite{Zvonkin} and the convergence concept for random variables in topological spaces in \cite{Stefan2}, which extends the probabilistic approach of Maslowski and Seidler \cite{Maslowski2000}. Our arguments for the strong Feller property are mostly probabilistic, relatively elementary and can still deal with non-regular drifts. This allows extensions in various ways and are applicable in different, more complex situations.\\\\
		\textit{Keywords}: stochastic delay differential equations, stochastic functional differential equations, retarded differential equations, strong Feller property, pathwise uniqueness, regularization by white noise, singular drift, unbounded drift\\\\
		MSC 2010: primary 34K50; secondary 60B10, 60B12, 60H10.
	\end{abstract}
	
	\section{Introduction}
	In this paper, we consider stochastic functional differential equations of the following form 
	\begin{align}
		\label{eq}
		\begin{aligned}
			\dd{X^x}(t)&=B(t,X^x)\dd{t}+\sigma(t,X^x(t))\dd{W}(t)\\
			X_0&=x\in C\left([-r,0],\R^d\right)
		\end{aligned}
	\end{align}
	where $W$ is a $d$-dimensional Brownian motion, $B:\R_{\geq0}\times C(\R_{\geq-r},\R^d)\to\R^d$ is non-anticipating and $\sigma:\R_{\geq0}\times\R^d\to\R^{d\times d}$ is measurable, bounded, non-degenerate and Lipschitz in space.
	
	Non-functional stochastic differential equations (SDEs) with discontinuous drift have been extensively studied: Portenko \cite{Portenko}, Veretennikov \cite{Ver} and Zvonkin \cite{Zvonkin} considered - among other things - well-posedness for SDEs with bounded, discontinuous drift terms. Krylov and R\"ockner have shown existence and uniqueness for locally unbounded drifts and constant, non-degenerate diffusion coefficients in \cite{KrylovRoeckner}. Singular SDEs with non-constant, non-degenerate diffusion matrices have been studied by Mart\'inez, Gy\=ongy \cite{Gyoengy2001} and Zhang \cite{Zhang2011}. Additionally, there are numerous results for the strong Feller property for non-functional, singular SDEs with the Euclidean state space $\R^d$ (i.e. \cite{Zhang2011}).
	
	However, we are interested in the strong Feller property for functional SDEs with the state space of path segments $C\left([-r,0],\R^d\right)$ for some $r>0$. Es-Sarhir, von Renesse and Scheutzow established a Harnack-inequality under Lipschitz conditions and constant, non-degenerate diffusion matrices in \cite{es-sarhir2009}, which implies the strong Feller property. Wang and Yuan proved a log-Harnack inequality for non-constant, non-degenerate diffusion coefficients in \cite{WangYuan}. In \cite{Stefan1} and \cite{ChinesischerTyp} well-posedness has been considered for SDEs with a drift consisting of a functional part and a non-functional, locally unbounded part. The strong Feller property has been shown in \cite{Stefan2}.
	
	To prove the strong Feller property for functional, locally unbounded drifts, we use the following convergence concept for random variables which has been invented by the author in \cite{Stefan2}.
	\begin{Theorem}
		\label{goodconv}
		Let $\left(\Omega,\mathcal{F},\Prob\right)$ be some probability space and $\left(E,d\right)$ be a metric space. Furthermore, let $X,X_n:\Omega\to E$, $n\in\N$ be measurable maps. Then the statement
		\begin{enumerate}
			\item
			\begin{enumerate}
				\item$\lim\limits_{n\to\infty}\Prob^*\left(d\left(X,X_n\right)\geq\varepsilon\right)=0 \ \forall\varepsilon>0$,
				\item$\lim\limits_{n\to\infty}\Prob_{X_n}\left(O\right)=\Prob_X\left(O\right)$ for all open $O\subset E$
			\end{enumerate}
		\end{enumerate}
		implies
		\begin{enumerate}
			\item[2.]$\lim\limits_{n\to\infty}\E\abs{f(X)-f(X_n)}=0 \text{ for all bounded, measurable }f:E\to\R$
		\end{enumerate}
		where $\Prob^*$ denotes the outer measure of $\Prob$. Additionally, if there exists some null set $N\subset\Omega$ such that $X(\Omega\setminus N)$ is separable, then the converse implication is also true.
	\end{Theorem}
	\begin{proof}
		See Theorem 1.7 in \cite{Stefan2}.
	\end{proof}
	Although it seems to be a general probabilistic but rather abstract result, it can be applied to the framework of stochastic differential equations. It allows us to show convergence of the Girsanov densities
	$$\lim\limits_{y\to x}\frac{\dd{X^y}_{|[-r,T]}}{\dd{M^y}_{|[-r,T]}}=\frac{\dd{X^x}_{|[-r,T]}}{\dd{M^x}_{|[-r,T]}}\text{ in probability}$$
	for $T>r$ where $M^x$ denotes the solution of equation \eqref{eq} without drift, i.e.
	\begin{align*}
		\dd{M}^x(t)&=\sigma(t,M^x(t))\dd{W}(t)\\
		M_0&=x
	\end{align*}
	Roughly speaking, this is the main step for proving the strong Feller property. For that, Theorem \ref{goodconv} is extremely useful, especially to deal with discontinuous coefficients, which extends the probabilistic approach of Maslowski and Seidler \cite{Maslowski2000}. Remarkably, the coefficients in the drift-free equation do not depend on the past. For proving well-posedness and stability we combine Zvonkin's transformation \cite{Zvonkin} and combine it with the convergence result Theorem \ref{goodconv}. In both cases, we extensively make use of Krylov's estimate for semimartingales \cite{Krylov}. 
	\begin{Notation}
		If not stated otherwise, $W$ will be a $d$-dimensional Brownian motion on some arbitrary but fixed probability space $(\Omega,\mathcal{F},\Prob)$ and every strong solution shall be defined on this space.
		
		However, weak solutions of equation $\eqref{eq}$ might be defined on different filtrated probability spaces. Therefore, we use the short hand notation $(X^x,\tilde{W}^x,\mathds{Q}^x)$ where $X^x$ is an adapted, continuous stochastic process, $\tilde{W}^x$ is an adapted Brownian motion, both with respect to some filtrated probability space $(\tilde{\Omega}^x,\tilde{\mathcal{F}}^x,\mathds{Q}^x,(\tilde{\mathcal{F}}_t)_{t\geq0}^x)$, and $(X^x,\tilde{W}^x)$ solves equation \eqref{eq} with initial value $x$.
	\end{Notation}
	\begin{Condition}
		\label{driftc1}
		For each $T>0$ there exist a measurable $F:[0,T]\times\R^d\to\R^d$ with
		$$\int_{0}^{T}\int_{\R^d}\abs{F(t,x)}^{d+1}\dd{x}\dd{t}<\infty$$
		and $C_1=C_1(T),C_2=C_2(T)\geq0$ with
		$$\int_{0}^{t}\abs{B(s,x)}^2\dd{s}\leq\int_{0}^{t}\abs{F(s,x(s))}\dd{s}+C_1\sup\limits_{s\in[-r,t]}\abs{x(s)}^2+C_2$$
		for all $t\in[0,T]$and $x\in C\left(\R_{\geq-r},\R^d\right)$.
	\end{Condition}
	\begin{Condition}arleta
		\label{sigmac}
		Assume that for all $T>0$ there exists some $C_\sigma=C_\sigma(T)>0$ such that
		\begin{enumerate}
			\item $C_\sigma^{-1}I_{d\times d}\leq\sigma(t,x)\sigma(t,x)^\top\leq C_\sigma I_{d\times d} \ \forall t\in[0,T],x\in\R^d$,
			\item $\norm{\sigma(t,x)-\sigma(t,y)}_{HS}\leq C_\sigma\abs{x-y} \ \forall t\in[0,T],x,y\in\R^d$.
		\end{enumerate}
	\end{Condition}
	\begin{Condition}
		\label{strictpast}
		Assume that there is an $r_{\tilde{B}}\in(0,r)$ such that
		$$B(t,x)=\tilde{B}(t,x)+b(t,x(t))$$
		with $b\in L^{2d+2}\left(\R_{\geq0}\times\R^d;\R^d\right)$ and $\tilde{B}:\R_{\geq0}\times C\left(\R_{\geq-r},\R^d\right)\to\R^d$ measurable where, for fixed $t\geq0$, $\tilde{B}(t,x)$ depends only on $x_{|[-r,t-r_{\tilde{B}}]}$, i.e. 
		$$\tilde{B}(t,x)=\tilde{B}(t,y)\text{ if }x(s)=y(s) \ \forall s\in[-r,t-r_{\tilde{B}}].$$
	\end{Condition}
	\begin{Condition}
		\label{driftc2}
		For $t\in[0,r)$ the function $x\mapsto B(t,x)$ is continuous. Moreover, for each $T>0$ there exist functions $\tilde{F}\in L_{loc}^{d+1}\left([0,T]\times\R^d\right)$ and $G,H:\R_{\geq0}\to\R_{\geq0}$ with $G$ monotone increasing and
		$$\lim\limits_{R\to\infty}\frac{H(R)}{R}=\infty$$
		such that
		$$\int_{0}^{t}H\left(\abs{B(s,x)}^2\right)\dd{s}\leq\int_{0}^{t}\abs{\tilde{F}(s,x(s))}\dd{s}+G\left(\sup\limits_{s\in[-r,t]}\abs{x(s)}\right)$$
		for all $t\in[0,T]$and $x\in C\left(\R_{\geq-r},\R^d\right)$.
	\end{Condition}
	\begin{Notation}
		In the sequel, let $r>0$ be an arbitrary but fixed number and define
		$$\mathcal{C}:=C\left([-r,0],\R^d\right)$$
		equipped with the supremum norm $\norm{\cdot}_\infty$. For a process $X$ defined on $[t-r,t]$ with $t\geq0$, we write
		$$X_t(s):=X(t+s), \ s\in[-r,0].$$
	\end{Notation}
	\begin{Condition}
		\label{boundedmemory}
		The non-anticipating function $B$ has bounded memory, i.e. it holds
		$$B(t,x)=B(t,y)\text{ if }x(s)=y(s) \ \forall s\in [t-r,t].$$
		Then we use the abuse of notation
		$$B(t,x_t)=B(t,x) \ \forall x\in C\left(\R_{\geq-r},\R^d\right)$$
		and similarly for $\tilde{B}$ if \eqref{strictpast} is satisfied.
	\end{Condition}
	The main results read as follows.
	\begin{Theorem}[Existence]
		\label{existence}
		Assume \eqref{driftc1} and \eqref{sigmac}. Then for each initial value $x\in\mathcal{C}$, equation \eqref{eq} has a global weak solution $(X^x,\tilde{W}^x,\mathds{Q}^x)$, which is unique in distribution.
	\end{Theorem}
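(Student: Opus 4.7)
The plan is to construct a weak solution by a Girsanov transformation of the drift-free equation, using Krylov's estimate to handle the $L^{d+1}$-component of the drift and sub-interval Khas'minskii arguments to handle the quadratic growth in $\sup |x|$.

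First I would solve the drift-free equation $\dd{Y}(t) = \sigma(t,Y(t))\dd{W}(t)$ with $Y_0 = x$, which has a unique, globally defined strong solution by Condition \eqref{sigmac}. The boundedness of $\sigma$ together with Doob's inequality yield the Gaussian-type estimate $\E\bigl[\exp(\lambda \sup_{s \in [0,T]} \abs{Y(s)}^2)\bigr] < \infty$ for $\lambda$ below a threshold depending only on $T$ and $C_\sigma$, while Krylov's estimate \cite{Krylov} is available for $Y$ since $\sigma$ is bounded and non-degenerate. Setting $\phi(s) := \sigma(s,Y(s))^{-1} B(s,Y)$, Condition \eqref{driftc1} gives the pathwise bound
$$\int_0^t \abs{\phi(s)}^2 \dd{s} \leq C_\sigma \int_0^t \abs{F(s,Y(s))} \dd{s} + C_\sigma C_1 \sup_{s \in [-r,t]} \abs{Y(s)}^2 + C_\sigma C_2,$$
so both terms on the right have finite exponential moments on any sub-interval $[t_i,t_{i+1}] \subset [0,T]$ of sufficiently small length $\delta = \delta(T, C_\sigma, C_1, \norm{F}_{L^{d+1}})$.

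On each such sub-interval, Khas'minskii's lemma implies that the Girsanov exponential $Z(t) := \exp\bigl(\int_0^t \phi(s)^\top \dd{W}(s) - \tfrac{1}{2} \int_0^t \abs{\phi(s)}^2 \dd{s}\bigr)$ satisfies $\E[Z(t_{i+1}) \mid \mathcal{F}_{t_i}] = Z(t_i)$; gluing via the tower property makes $Z$ a genuine $\Prob$-martingale on $[0,T]$. Define $\mathds{Q}^x := Z(T) \cdot \Prob$ on $\mathcal{F}_T$; Girsanov's theorem then provides a Brownian motion $\tilde{W}^x$ under $\mathds{Q}^x$ such that $(Y, \tilde{W}^x, \mathds{Q}^x)$ solves \eqref{eq} on $[0,T]$. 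A Kolmogorov consistency argument along $T_n \uparrow \infty$ extends this to a global weak solution, since $Y$ itself is non-exploding and $\mathds{Q}^x$ is only locally absolutely continuous with respect to $\Prob$. For uniqueness in distribution I would reverse the transformation: given any weak solution $(X^x, \tilde{W}^x, \mathds{Q}^x)$, the process $X^x$ is a non-degenerate It\^o semimartingale, so Krylov's estimate applies and the same sub-interval/Khas'minskii argument shows that the reverse density $\hat{M}(t)$ built from $\hat{\phi}(s) := \sigma(s,X^x(s))^{-1} B(s,X^x)$ is a true martingale. Under $\hat{M}(T) \cdot \mathds{Q}^x$ the process $X^x$ solves the drift-free equation, whose law is uniquely determined by Condition \eqref{sigmac}, and since $\hat{M}(T)$ is a measurable functional of $(X^x, \tilde{W}^x)$, pushing this unique joint law forward by $\hat{M}(T)^{-1}$ recovers the law of $X^x$ under $\mathds{Q}^x$ uniquely from the initial condition.

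The main obstacle is verifying that the Girsanov exponential is a \emph{true} martingale rather than merely a local one: the quadratic growth term $C_1 \sup_{s \leq t} \abs{x(s)}^2$ in Condition \eqref{driftc1} rules out a direct application of Novikov's criterion on $[0,T]$, while the singular $L^{d+1}$-term $F$ admits no pointwise control on $\abs{B}$. The sub-interval decomposition combined with Khas'minskii's lemma, Krylov's estimate for the non-degenerate semimartingale, and the Gaussian tail of $\sup_{s \leq T} \abs{Y(s)}$ is precisely what bypasses both obstructions and forms the technical heart of the argument.
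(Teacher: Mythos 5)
Your existence construction is essentially the paper's: start from the drift-free strong solution $M^x$, verify Novikov's criterion on sufficiently short subintervals of $[0,T]$ using the exponential moment bounds for $\int F(s,M^x(s))\dd{s}$ (Lemma~\ref{expfM}) and for $\sup_{s\le T}\abs{M^x(s)}^2$ (Lemma~\ref{expsupM}) together with Condition~\eqref{driftc1}, glue the martingale property across the partition, and extend to a global weak solution by consistency across $T$. Labeling the martingale step ``Khas'minskii's lemma'' rather than Novikov's criterion on subintervals is cosmetic; the estimates you compute are the same ones the paper uses.

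The uniqueness-in-distribution argument, however, departs from the paper and has a real gap. You take an arbitrary weak solution $(X^x,\tilde{W}^x,\mathds{Q}^x)$ and assert that ``the same sub-interval/Khas'minskii argument shows that the reverse density $\hat{M}(t)$ built from $\hat\phi(s)=\sigma(s,X^x(s))^{-1}B(s,X^x)$ is a true martingale.'' This does not transfer. The Khas'minskii/conditional-Novikov step for $M^x$ rests on the \emph{conditional} Krylov estimate (Lemma~\ref{Krylov}) and ultimately on $M^x$ being a clean Markov diffusion with no drift. For $X^x$ under $\mathds{Q}^x$ neither is available: $X^x$ solves an equation with a functional drift, so it is not Markov in $\R^d$, and the Krylov-type estimate one can prove for $X^x$ (Corollary~\ref{KrylovX}) is unconditional, which is not enough to run the iterated subinterval argument. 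Without the true-martingale property, $\hat{M}(T)\cdot\mathds{Q}^x$ is only a sub-probability measure and the reverse Girsanov step fails. The paper circumvents this by localization: it introduces the stopping times $\tau^n=\inf\{t\ge0:\int_0^t\abs{B(s,X^x)}^2\dd{s}\ge n\}\wedge T$, for which Novikov's condition holds trivially on the stopped process, applies Girsanov to identify the stopped $X^x$ with the drift-free solution driven by $\tilde{W}^{x,n}$, and then recovers the law of $X^x$ by letting $n\to\infty$. This localization is the essential idea your proposal is missing; it replaces the unavailable conditional estimates on $X^x$ with the trivial boundedness of $\int_0^{\tau^n}\abs{B}^2\dd{s}$.
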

	\begin{Theorem}[Pathwise Uniqueness]
		\label{pathwiseuniq}
		Assume the localized versions of \eqref{driftc1}, \eqref{sigmac} and \eqref{strictpast}. Then local pathwise uniqueness holds for equation \eqref{eq}, i.e. let $(X^x,W)$ and $(\hat{X}^x,W)$ be two weak solutions of equation \eqref{eq} with initial value $x\in\Co$ on some time interval $[0,\tau]$ for some common Brownian motion $W$ and stopping time $\tau$. Then it follows $X^x=\hat{X}^x$ on $[0,\tau]$ almost surely.
	\end{Theorem}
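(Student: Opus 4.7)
The plan is to combine an iterative reduction along intervals of length $r_{\tilde{B}}$ with a Zvonkin transformation applied to the non-functional singular part $b$ of the drift. As a first step, using the localized versions of the conditions, I would localize by stopping times
\[\sigma_N := \tau \wedge \inf\left\{t \geq 0 : \sup_{s \in [-r,t]} \bigl(\abs{X^x(s)} \vee \abs{\hat{X}^x(s)}\bigr) \geq N\right\},\]
so that it suffices to show $X^x = \hat{X}^x$ on $[0, \sigma_N]$ for each $N \in \N$ and let $N \to \infty$; after this localization the global versions of \eqref{driftc1}--\eqref{strictpast} are available on the relevant ball.

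Next, I would argue inductively that $X^x = \hat{X}^x$ almost surely on $\bigl[-r, (k+1) r_{\tilde{B}} \bigr] \cap [-r, \sigma_N]$ for every $k \in \N_0$. By the strict-past structure in \eqref{strictpast} together with the inductive hypothesis -- which for $k = 0$ reduces to the common initial condition on $[-r,0]$ -- one has $\tilde{B}(t, X^x) = \tilde{B}(t, \hat{X}^x) =: \beta(t)$ for all $t \in [k r_{\tilde{B}}, (k+1) r_{\tilde{B}}]$, with $\beta$ completely determined by the common past on $[-r, k r_{\tilde{B}}]$. On this interval both $X^x$ and $\hat{X}^x$ therefore solve the same \emph{non-functional} SDE
\[ dY(t) = \bigl[\beta(t) + b(t, Y(t))\bigr] dt + \sigma(t, Y(t)) dW(t) \]
with identical initial value and the common driving Brownian motion $W$, reducing the task to pathwise uniqueness of this equation. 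To treat the singular part $b$, I would solve the backward PDE
\[ \partial_t u + \tfrac{1}{2} \operatorname{tr}(\sigma\sigma^\top D^2 u) + b \cdot \nabla u - \lambda u + b = 0, \qquad u(T, \cdot) = 0, \]
for $\lambda$ sufficiently large: since $b \in L^{2d+2}([0,T]\times\R^d)$ is subcritical in the Ladyzhenskaya--Prodi--Serrin sense ($(d+2)/(2d+2) < 1$) and $\sigma$ satisfies \eqref{sigmac}, classical parabolic estimates yield $u$ with $\norm{u}_\infty < 1/2$ and $\nabla u$ bounded, so that $\Phi(t, x) := x + u(t, x)$ is a bi-Lipschitz diffeomorphism uniformly in $t$. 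Itô's formula applied to $\Phi(t, Y(t))$ then absorbs the singular term $b$ and yields an SDE for $\Phi(t, Y)$ with Lipschitz diffusion $(I + \nabla u)\sigma$ and drift $\lambda u(t, Y) + (I + \nabla u(t, Y))\beta(t)$.

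The main obstacle is that, in contrast to the classical Zvonkin setting, the transformed drift still contains the factor $\beta(t)$, which from \eqref{driftc1} is only known to lie in $L^2([0, T])$ almost surely rather than being bounded. Writing $Z := \Phi(\cdot, X^x) - \Phi(\cdot, \hat{X}^x)$ and applying Itô to $\abs{Z}^2$ yields an inequality of the form $d\abs{Z}^2 \leq C \bigl(1 + \abs{\beta(t)}\bigr) \abs{Z}^2 dt + dM(t)$ for a local martingale $M$, whose naive Grönwall treatment produces an exponential weight $\exp\!\bigl(C \int_0^{\cdot} (1 + \abs{\beta(s)}) ds\bigr)$ that is only almost-surely finite. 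I would handle this by the additional localization $\rho_M := \inf\{t \geq 0 : \int_0^t \abs{\beta(s)}^2 ds \geq M\}$, which by \eqref{driftc1} together with Krylov's estimate for semimartingales applied to $\int \abs{F(s, X^x(s))} ds$ tends to infinity almost surely; deterministic Grönwall pathwise on $[k r_{\tilde{B}}, (k+1) r_{\tilde{B}} \wedge \rho_M \wedge \sigma_N]$ then forces $Z \equiv 0$ there. Inverting $\Phi$, letting $M, N \to \infty$ and iterating in $k$ completes the induction and hence the proof.
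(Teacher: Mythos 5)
Your overall architecture (localize, iterate along intervals shorter than $r_{\tilde B}$ to freeze the functional part, Zvonkin-transform the non-functional singular part $b$, then Gronwall) is the same as the paper's, and the observation that $\tilde B(t,X^x)=\tilde B(t,\hat X^x)$ on the current induction window is indeed the crucial consequence of \eqref{strictpast}. But there is a genuine gap at the heart of the Zvonkin step. With $b\in L^{2d+2}$, the PDE solution $u$ lies in $W^{1,2}_p$ for $p=2d+2>d+2$, so by Theorem~\ref{embedding} the best one can conclude is that $\nabla u$ is bounded and Hölder continuous in space with some exponent $\varepsilon<\frac{d}{2d+2}<1$; it is \emph{not} Lipschitz, and neither is the transformed diffusion $(I+\nabla u)\sigma$. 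Consequently the differential inequality
\[
d\abs{Z}^2 \leq C\bigl(1+\abs{\beta(t)}\bigr)\abs{Z}^2\,dt + dM(t)
\]
that your plan rests on is false: the quadratic-variation term $\norm{(I+\nabla u)\sigma(\cdot,X^x)-(I+\nabla u)\sigma(\cdot,\hat X^x)}_{HS}^2\,dt$ and the cross term $2Z^\top(\nabla u(\cdot,X^x)-\nabla u(\cdot,\hat X^x))\beta(t)\,dt$ are each controlled only by $\abs{Z}^{2\varepsilon}$ or $\abs{Z}^{1+\varepsilon}\abs{\beta}$, not by $\abs{Z}^2$. Deterministic Gronwall, even after your additional localization $\rho_M$, cannot close from a Hölder-type inequality. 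This is exactly where the paper's proof differs from the classical (bounded-drift) Zvonkin argument: it inserts the non-Lipschitz difference quotients into the random multiplier
\[
A(t):=c\int_S^t \abs{\tilde B(s,X^x)}\frac{\norm{Du(s,X^x(s))-Du(s,\hat X^x(s))}_{op}}{\abs{\tilde Z(s)}}\1_{\tilde Z(s)\neq 0}\,ds + c\int_S^t\frac{\norm{Du\sigma(s,X^x(s))-Du\sigma(s,\hat X^x(s))}_{HS}^2}{\abs{\tilde Z(s)}^2}\1_{\tilde Z(s)\neq 0}\,ds,
\]
controls these quotients by the Hardy--Littlewood maximal function (Lemma~\ref{Hardy-Littlewood}), proves $\E\exp\bigl(\tfrac12 A(T)\bigr)<\infty$ via the exponential Krylov estimates (Lemmas~\ref{expfX}, \ref{expsupX}), and only then applies the \emph{stochastic} Gronwall lemma~\ref{Gronwall}. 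Your sketch does not mention the maximal-function bound, the exponential integrability of $A$, or the stochastic Gronwall lemma, and without these the argument does not close. (As a secondary point: the $\lambda$-resolvent form of the PDE is a cosmetic variant of the paper's choice $u(T,\cdot)=0$ on a short window of length $\delta$; it does not by itself repair the regularity issue.)
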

	\begin{Theorem}[Strong Feller Property]
		\label{StrongFeller}
		Assume \eqref{driftc1}, \eqref{sigmac}, \eqref{driftc2} and \eqref{boundedmemory}. Let $(X^x,\tilde{W}^x,\mathds{Q}^x)$ be weak solutions with initial value $x\in\Co$. Then one has the strong Feller property for all $t>r$, i.e.
		$$\lim\limits_{y\to x}\E _{\mathds{Q}^y}f(X^y_t)=\E_{\mathds{Q}^x}f(X^x_t) \ \forall f\in B_b(\Co).$$
	\end{Theorem}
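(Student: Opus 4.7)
The approach follows an extended version of the probabilistic strategy of Maslowski and Seidler \cite{Maslowski2000}, combining the Markov structure implied by Condition \ref{boundedmemory} with the topological convergence framework of \cite{Stefan2} and Krylov's estimate for semimartingales. By Condition \ref{boundedmemory} and the uniqueness in distribution from Theorem \ref{existence}, the family of segments $(X^x_t)_{t \geq 0}$ forms a Markov process on $\Co$ with a well-defined transition semigroup $P_t$, so it suffices to show that for every $t > r$ and every $f \in B_b(\Co)$ the map $y \mapsto P_t f(y) = \E_{\mathds{Q}^y} f(X^y_t)$ is continuous at each $x \in \Co$.

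For a sequence $y_n \to x$ in $\Co$, I would realize the weak solutions on a common probability space via the convergence framework of \cite{Stefan2}. On $[0, r)$ the drift $B(s, \cdot)$ is continuous by \ref{driftc2}, while the super-linear bound in \ref{driftc2} together with \ref{driftc1} and Krylov's estimate yields equi-integrability of $\int_0^{\cdot} \abs{B(s, X^{y_n})}\dd{s}$ (absorbing the possible loss of continuity at $s = r$), tightness of the laws, and identification of the limit. From this I expect uniform convergence $X^{y_n} \to X^x$ in probability on $[0, r]$; in particular $X^{y_n}_r \to X^x_r$ in probability in $\Co$.

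The decisive step is to upgrade this pathwise convergence of the initial segment to total variation convergence of the distributions at time $t > r$. Crucially, for $s \geq r$ the segment $X^y_s$ no longer carries any deterministic piece of the initial condition. Applying Girsanov on $[r, t]$, with Novikov's condition verified via the Krylov estimate under \ref{sigmac} and \ref{driftc1}, expresses $\mathrm{Law}(X^y_t)$ as a density against the driftless diffusion started from $X^y_r$. Strong Feller for this driftless SDE is classical thanks to the non-degeneracy and Lipschitz regularity of $\sigma$ in \ref{sigmac}; combined with $L^1$-convergence of the Radon--Nikodym densities (a consequence of the preceding step and a careful use of uniform integrability), this should yield $\norm{\mathrm{Law}(X^{y_n}_t) - \mathrm{Law}(X^x_t)}_{TV} \to 0$ and hence $P_t f(y_n) \to P_t f(x)$.

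\textbf{Main obstacle.} The hardest part is the last step: promoting uniform pathwise convergence of the initial segment to total variation convergence at times $t > r$. Because $B$ is genuinely functional, no Zvonkin-type PDE transformation of the state space is available, and one must rely entirely on Girsanov densities controlled through Krylov's semimartingale estimate. Ensuring uniform $L^1$-convergence of these densities along $y_n \to x$, while tracking how the $y$-dependent reference measure (the driftless diffusion launched from a $y$-dependent starting segment) behaves, is the technically delicate point where the convergence framework of \cite{Stefan2} is essential.
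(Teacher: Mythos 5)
The overall framework you sketch — Girsanov reduction to a driftless diffusion whose strong Feller property is classical, followed by $L^1$-convergence of the Radon--Nikodym densities via Krylov's estimate and the convergence machinery of \cite{Stefan2} — is indeed the strategy the paper uses. However, your implementation has a genuine gap in where you apply Girsanov, and this gap cannot be filled under the hypotheses of the theorem.

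You propose to first establish convergence in probability $X^{y_n}\to X^x$ on $[0,r]$ and then apply Girsanov on $[r,t]$ with the driftless diffusion launched from the random segment $X^{y}_r$. The preliminary convergence step is essentially a stability statement for the actual (drifted) solutions on $[0,r]$. But the Strong Feller theorem assumes only \eqref{driftc1}, \eqref{sigmac}, \eqref{driftc2}, \eqref{boundedmemory}; it does \emph{not} assume the structural condition \eqref{strictpast}, which is precisely what the paper uses (together with Zvonkin's transformation) to prove pathwise uniqueness (Theorem \ref{pathwiseuniq}) and stability (Theorem \ref{stability}). Without pathwise uniqueness, tightness plus identification of the limit gives you at best weak convergence along subsequences to \emph{some} weak solution, not the desired convergence in probability of $X^{y_n}$ to $X^x$ on a common space. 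You would be smuggling in a stability result whose proof in this paper requires an extra hypothesis. Moreover, running Girsanov from a random starting segment $X^y_r$ that itself depends on $y$ introduces exactly the kind of $y$-dependent reference measure that makes the $L^1$-density estimate hard to control — you flagged this yourself as the "technically delicate point."

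The paper sidesteps both problems by applying Girsanov from time $0$, not from time $r$. Theorem \ref{ExUni} expresses $\E_{\mathds{Q}^x}f(X^x_t) = \E_\Prob[D^x(t)f(M^x_t)]$ where $M^x$ is the \emph{strong} solution of the driftless SDE under the fixed Brownian motion $W$, and $D^x$ is the cumulative Girsanov density on $[0,t]$. Since $\sigma$ is Lipschitz and non-degenerate, $M^x$ and $M^y$ live on the same space with the same $W$ and converge trivially as $y\to x$; no stability for the drifted equation is needed at all. The difference $\E_{\mathds{Q}^x}f(X^x_t)-\E_{\mathds{Q}^y}f(X^y_t)$ is then split into $\E_\Prob[D^x(t)(f(M^x_t)-f(M^y_t))]$, handled by the strong Feller property of the driftless semigroup (Theorem \ref{martfeller}, a log-Harnack consequence) plus dominated convergence, and $\E_\Prob[(D^x(t)-D^y(t))f(M^y_t)]$, handled by showing $\E_\Prob|D^y(t)-D^x(t)|\to0$. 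Since $\E_\Prob D^z(t)=1$, a Scheffé-type argument reduces this to convergence in probability, which in turn follows from $\int_0^t|a^y-a^x|^2\,\dd{s}\to0$ in probability, proved via Lemma \ref{bconv}, the continuity of $\sigma^{-1}$, and Krylov's estimate. You should replace your step of applying Girsanov on $[r,t]$ with an application on $[0,t]$ against the reference $M^x$; that single change removes the need for stability of $X^x$ and makes the rest of your plan carry through.
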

	\begin{Theorem}[Stability]
		\label{stability}
		Assume \eqref{driftc1}, \eqref{sigmac},\eqref{strictpast}, $\eqref{driftc2}$ and \eqref{boundedmemory}. Let $X^x$ be the strong solutions with initial value $x\in\Co$. Then one has
		$$\lim\limits_{y\to x}\E\norm{X_t^y-X_t^x}_\infty^\gamma=0$$
		for all $0<\gamma<2$ and for $t>r$
		$$\lim\limits_{y\to x}\E\abs{f(X_t^y)-f(X_t^x)}=0 \ \forall f\in B_b(\Co).$$
	\end{Theorem}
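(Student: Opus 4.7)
The strategy splits the two assertions and uses sup-norm stability to upgrade the strong Feller property (Theorem~\ref{StrongFeller}) into pathwise convergence of bounded measurable test functions. First, I would apply a Zvonkin-type transformation adapted to the functional setting. Under Condition~\ref{strictpast} the drift decomposes as $B(t,x) = \tilde B(t,x) + b(t,x(t))$ with $b \in L^{2d+2}$ and $\tilde B$ depending only on the strict past; letting $U$ solve the parabolic PDE associated to $b$ and $\sigma$, the process $Y^x(t) := U(t,X^x(t))$ satisfies an SDE whose new drift is built from $\tilde B$ (composed with $\nabla U$) plus bounded error terms and whose diffusion is spatially Lipschitz. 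This mirrors \cite{Stefan1,ChinesischerTyp} and the classical argument of \cite{Zvonkin}.

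For the $L^\gamma$-statement I would iterate on sub-intervals of length $r_{\tilde B}$, on which $\tilde B(s,X^x)$ depends on $X^x$ only through values already determined, to establish an estimate of the form
$$\E\sup_{s\leq t}\abs{Y^y(s)-Y^x(s)}^p \longrightarrow 0 \quad \text{as } y\to x$$
via BDG, Krylov's estimate for semimartingales and Gronwall. The inverse transformation $U^{-1}$ being bi-Lipschitz with Jacobian controlled in appropriate $L^q$ norms, a H\"older inequality pushes the bound back to $X^x$ and produces $\E\norm{X^y_t - X^x_t}_\infty^\gamma \to 0$ for every $\gamma<2$; the gap at $\gamma=2$ stems from the quadratic exponent in Condition~\ref{driftc1}.

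For the second assertion with $t>r$ and $f\in B_b(\Co)$, I would combine the pathwise step above (which gives convergence in probability of $X^y_t$ to $X^x_t$ in $\Co$) with Theorem~\ref{StrongFeller}. The latter, being pointwise convergence of $P_t f$ for every bounded measurable $f$, implies total variation convergence of the laws $\mu^y := \Prob\circ(X^y_t)^{-1}$ to $\mu^x$ (Scheff\'e applied to Radon--Nikodym derivatives against $\mu^x + \sum_n 2^{-n}\mu^{y_n}$ for any sequence $y_n \to x$). Given $\varepsilon>0$, regularity of Borel measures on the Polish space $\Co$ together with Lusin's theorem yields a bounded continuous $g$ with $\mu^x(\abs{f-g})<\varepsilon$; splitting
$$\E\abs{f(X^y_t)-f(X^x_t)} \leq \mu^y(\abs{f-g}) + \E\abs{g(X^y_t)-g(X^x_t)} + \mu^x(\abs{f-g})$$
the first summand is bounded by $\mu^x(\abs{f-g}) + 2\norm{f}_\infty\norm{\mu^y-\mu^x}_{TV} < 2\varepsilon$ for $y$ near $x$, the middle summand vanishes by bounded continuous convergence and the pathwise step, and the last is at most $\varepsilon$.

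The principal obstacle will be the Gronwall iteration when controlling the difference $\tilde B(s,X^y)-\tilde B(s,X^x)$: Condition~\ref{driftc2} only delivers a super-linear integrability bound, not Lipschitz continuity of $\tilde B$, so stability of $\int_0^t \bigl(\tilde B(s,X^y)-\tilde B(s,X^x)\bigr)\dd s$ as $y\to x$ is delicate and will likely need the convergence concept for random variables in topological spaces from \cite{Stefan2}, applied together with Krylov's estimate to the semimartingale $X^y-X^x$ to pass from integrability bounds to actual $L^p$ smallness.
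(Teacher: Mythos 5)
Your overall scaffolding (Zvonkin transformation, iteration over sub-intervals of length below $r_{\tilde{B}}$, stochastic Gronwall with an exponential multiplier built from maximal functions) does match the paper's route to the first assertion. You also correctly flag the genuine obstacle: $\tilde{B}$ is not Lipschitz, so the increment $\tilde{B}(s,X^y)-\tilde{B}(s,X^x)$ that enters the Gronwall estimate must be handled by a separate argument. What is missing, however, is the actual resolution. You suggest ``Krylov's estimate to pass from integrability bounds to actual $L^p$ smallness,'' but Krylov only delivers \emph{uniform integrability}, never smallness. The paper's Proposition~\ref{BXconv} is what does the work, and its decisive ingredient is the strong Feller property itself: since, by Condition~\ref{strictpast}, $\tilde{B}(t,X^x_t)=g(t,X^x_S)$ for $t\in[S,S+r_{\tilde{B}}]$ with $g$ merely measurable, one needs convergence $\Prob(\abs{g(t,X_S^y)-g(t,X_S^x)}>\varepsilon)\to 0$; for $S>r$ this is obtained by feeding Theorem~\ref{StrongFeller} (convergence of $\E f(X^y_S)$ for all bounded measurable $f$) and the inductive convergence-in-probability hypothesis into Theorem~\ref{goodconv}, and only then does Krylov's bound plus Condition~\ref{driftc2} (via the de~la~Vall\'ee-Poussin criterion with $H$) upgrade this to $L^1$ convergence of the squared $\tilde{B}$-difference. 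Your sketch never invokes Theorem~\ref{StrongFeller} for the \emph{first} assertion at all, which leaves the central step unjustified.

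For the second assertion there is a genuine error: pointwise convergence $\E f(X^y_t)\to\E f(X^x_t)$ for every $f\in B_b(\Co)$ is setwise convergence of laws, and setwise convergence does \emph{not} imply total variation convergence. (On $[0,1]$, the measures with density $1+\sin(2\pi n x)$ converge setwise to Lebesgue measure by Riemann--Lebesgue yet stay at constant total variation distance $2/\pi$; Scheff\'e needs pointwise a.e.\ convergence of the densities, which setwise convergence does not supply.) Fortunately your Lusin splitting does not actually need total variation: to control $\mu^y(\abs{f-g})$ you may apply Theorem~\ref{StrongFeller} directly to the bounded measurable function $\abs{f-g}$, giving $\mu^y(\abs{f-g})\to\mu^x(\abs{f-g})<\varepsilon$. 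In fact your Lusin argument is essentially a re-derivation of the implication in Theorem~\ref{goodconv}, which the paper simply cites; so deleting the incorrect TV claim and citing that theorem (with convergence in probability from the first assertion and open-set convergence from Theorem~\ref{StrongFeller}) closes the gap cleanly.
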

	\begin{Remark} \ 
		\begin{enumerate}
			\item Conditions \eqref{driftc1} and \eqref{driftc2} are closed under linear combinations.
			\item Assume, one has
			$$B(t,x_t)=\int_{-r}^{0}k(t,x(t+s))\dd{\mu}(s)$$
			for some Borel measure $\mu$ on $[-r,0]$.
			Then \eqref{driftc1} is fulfilled if $k$ is of at most linear growth in the second variable uniformly on $[0,r)$ and
			$$k\in L^{2d+2}\left([0,T]\times\R^d\right) \ \forall T>0.$$
			If $x\mapsto k(t,x)$ is additionally continuous for $t\in[0,r)$ then condition \eqref{driftc2} will be satisfied. 
			The assumption $\operatorname{supp}\mu\subset[-r,-r_{\tilde{B}}]$ for some $r_{\tilde{B}}\in(0,r)$ implies \eqref{strictpast}.
			\item The continuity assumption in \eqref{driftc2} is not artificial. Consider the following equation
			$$\dd{X^x}(t)=\sgn\left(X^x(t-1)\right)\dd{t}+\dd{W}(t)$$
			with
			$$\sgn(x)=
			\begin{cases}
				1	&	\text{if }x\geq0,\\
				-1	&	\text{if }x<0.
			\end{cases}$$
			Then we have for the strong solutions
			\begin{align*}
				X^0(1)&=W(1)+1,\\
				X^{-1/n}(1)&=W(1)-1-\frac{1}{n}
			\end{align*}
			where we denoted constant paths by real numbers. All conditions are fulfilled but the continuity assumption on the interval $[0,r)$. However, neither the strong Feller property nor convergence in probability hold.
			\item Consider the one-dimensional, deterministic, functional equation
			\begin{align*}
				\dd{x}(t)&=B(t,x(t-1))\dd{t},\\
				x_0&=0
			\end{align*}
			with
			$$
			B(t,z):=
			\begin{cases}
				8t^7				& \text{if \ }t\in[0,1],z\in\R,\\
				\1_{\abs{z}\leq5}\abs{z}^{-1/8}	&	\text{otherwise.}
			\end{cases}
			$$
			The drift $B$ fulfills conditions \eqref{driftc1}, \eqref{strictpast} and \eqref{driftc2}. Computing the solution $x$ yields for $t\in[0,1]$
			$$x(t)=t^8$$
			Consequently, one has to solve
			\begin{align*}
				\dd{x}(t)&=(t-1)^{-1}\dd{t}\\
				x(1)&=1
			\end{align*}
			for $t\in[1,2]$. Integrating both sides yields
			$$x(t)=1+\int_{1}^{t}(s-1)^{-1}\dd{s}=\infty$$
			for each $t\in(1,2)$. It follows that the equation has no global solution in contrast to its regularized version
			$$\dd{X}(t)=B(t,X(t-1))\dd{t}+\dd{W}(t)$$
			although the conditions \eqref{driftc1}, \eqref{strictpast} and \eqref{driftc2} are fulfilled.
		\end{enumerate}
	\end{Remark}
	\section{A-priori Estimates and Existence}
	In the sequel, denote by $M^x$, $x\in\mathcal{C}$ the global, unique strong solution of
	\begin{align*}
		\dd{M^x}(t)&=\sigma\left(t,M^x(t)\right)\dd{W}(t),\\
		M_0^x&=x.
	\end{align*}
	\begin{Notation}
		We denote by $\norm{\cdot}_{op}$ and $\norm{\cdot}_{HS}$ the operator norm and respectively theHilbert-Schmidt norm for matrices $A\in\R^{d\times d}$, i.e.
		$$\norm{A}_{op}=\sup\limits_{v\in\R^d,\abs{v}=1}\abs{Av}, \ \norm{A}_{HS}=\sqrt{\sum_{i,j=1}^{d}\abs{A^{i,j}}^2}.$$
		Additionally, we write for $a,b\in[-\infty,+\infty]$
		$$a\wedge b:=\min\{a,b\}, \ a\vee b:=\max\{a,b\}.$$
	\end{Notation}
	\begin{Remark}
		Condition \eqref{sigmac} implies the following inequalities
		$$\norm{\sigma}_{op}\vee\norm{\sigma^{-1}}_{op}\leq\sqrt{C_\sigma}.$$
	\end{Remark}
	\begin{Lemma}
		\label{Krylov}
		Assume \eqref{sigmac} and let $T>0$, $p>\frac{d+2}{2}$ be given. Then one has for all $0\leq S<T$ and $f\in L_{p}\left([S,T]\times\R^d\right)$ the estimate
		$$\E\left(\int_{S}^{T}f(t,M^x(t))\dd{t}\bigg|\mathcal{F_S}\right)\leq C\norm{f}_{L^p\left([S,T]\times\R^d\right)}$$
		for some constant $C=C(d,p,T,C_\sigma)$. In particular, the constant $C$ is independent of the initial value $x\in\Co$.
	\end{Lemma}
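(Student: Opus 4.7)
The plan is to reduce the conditional estimate on $[S,T]$ to an unconditional estimate with a deterministic initial value, and then to invoke the classical multidimensional Krylov estimate, whose constant depends only on the ellipticity and boundedness bounds of $\sigma\sigma^\top$, on the dimension, the exponent, and the length of the time interval, and crucially \emph{not} on the starting point.

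First, since $\sigma$ is bounded, Lipschitz in space and uniformly elliptic by Condition~\ref{sigmac}, the driftless SDE defining $M^x$ has a unique strong solution and forms a time-inhomogeneous strong Markov family. Using regular conditional probabilities on $\mathcal{F}_S$ together with the independence of $W(S+\cdot)-W(S)$ from $\mathcal{F}_S$, the claim reduces to showing an unconditional bound of the form
\begin{align*}
    \E\int_{0}^{T-S}\abs{g(s,\tilde M^y(s))}\dd{s}\leq C\norm{g}_{L^p([0,T-S]\times\R^d)}
\end{align*}
for every $g\in L^p$, valid uniformly in the deterministic initial value $y\in\R^d$, where $\tilde M^y$ is the driftless It\^o process with diffusion $\tilde\sigma(s,\cdot):=\sigma(S+s,\cdot)$ and $\tilde M^y(0)=y$. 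Applying this to the shifted test function $g(s,z):=f(S+s,z)$ and observing that $\tilde\sigma$ inherits Condition~\ref{sigmac} with the same constant $C_\sigma(T)$ then yields the lemma after integrating out the random initial value $M^x(S)$.

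Second, the above unconditional estimate is precisely the classical multidimensional Krylov estimate, as stated for example in \cite{Krylov}, and it holds for any $p>(d+2)/2$. Its standard proof, via a parabolic Aleksandrov--Bakel'man--Pucci-type inequality for the operator $\partial_t+\tfrac{1}{2}\operatorname{tr}(\tilde\sigma\tilde\sigma^\top D^2)$, makes apparent that the constant $C$ depends only on $d$, $p$, the upper bound $T-S\leq T$, and the ellipticity and boundedness constants of $\tilde\sigma\tilde\sigma^\top$ (hence on $C_\sigma$), and in particular is independent of $y$. This gives the desired uniformity in $x\in\Co$. The only genuine thing to check is that the cited version of Krylov's estimate is available with time-dependent coefficients \emph{and} with a constant uniform in the starting point; both are standard features of the usual formulation, so beyond the Markov-property reduction above the proof is essentially a citation.
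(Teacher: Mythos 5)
Your proposal is correct but reaches the estimate by a slightly different route. The paper's proof is a one-line citation to Theorem~2.1 of \cite{Zhang2011}, which already states the Krylov bound \emph{conditionally} on $\mathcal{F}_S$ for driftless It\^o diffusions with the parabolic exponent $p>\frac{d+2}{2}$ and a constant depending only on $d$, $p$, $T$, $C_\sigma$ (hence uniform in the initial datum), so nothing remains beyond matching hypotheses. You instead reduce the conditional estimate to an unconditional one with a deterministic starting point via the Markov property of the driftless diffusion and then appeal to the classical unconditional Krylov estimate; this reduction is sound (the ordinary Markov property at the deterministic time $S$ suffices, since the conditional law of $M^x(S+\cdot)$ given $\mathcal{F}_S$ depends only on $M^x(S)$, and the time-shifted test function has the same $L^p$ norm), and it buys a proof from a more classical statement at the cost of an extra step. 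The one thing to watch is the attribution: the semimartingale Krylov estimate the paper imports from \cite{Krylov} in Section~2 has exponent $p+1\geq d+1$; for the sharper parabolic range $p>\frac{d+2}{2}$ you should cite \cite{Zhang2011} or Krylov's later parabolic ABP-type results rather than \cite{Krylov} as written. With the citation corrected, your argument is a mild unpacking of the paper's one-liner.
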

	\begin{proof}
		This follows directly from Theorem 2.1 in \cite{Zhang2011}.
	\end{proof}
	\begin{Lemma}
		\label{expfM}
		Assume \eqref{sigmac}. Then for any $R,T>0$ and $p>\frac{d+2}{2}$ there exists a constant $C_R=C_R(d,p,T,C_\sigma)$ such that
		$$\E\exp\left(\int_{0}^{T}f(t,M^x(t))\dd{t}\right)\leq C_R$$
		for all $f\in L^p\left([0,T]\times\R^d\right)$ with $\norm{f}_{L^p\left([0,T]\times\R^d\right)}\leq R$.
	\end{Lemma}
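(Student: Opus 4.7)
The argument follows the standard Khasminskii-type lemma together with a splitting trick. Since $|f|$ has the same $L^p$-norm as $f$ and $\exp(\int f)\leq \exp(\int |f|)$, we may assume $f\geq 0$. Expanding the exponential and rewriting each moment as an integral over the ordered simplex $\Delta_n(T):=\{0\leq t_1\leq\cdots\leq t_n\leq T\}$ yields
\[
\E\exp\!\left(\int_0^T f(s,M^x(s))\dd{s}\right) = \sum_{n=0}^\infty A_n, \qquad A_n := \E\int_{\Delta_n(T)}\prod_{i=1}^n f(t_i,M^x(t_i))\dd{t_1}\cdots\dd{t_n},
\]
with the convention $A_0=1$.

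The first key step is to condition on $\mathcal{F}_{t_{n-1}}$ inside $A_n$ and apply Lemma \ref{Krylov} to the innermost integral on $[t_{n-1},T]$. By the Markov property of $M^x$ and the fact that the constant $C=C(d,p,T,C_\sigma)$ from Lemma \ref{Krylov} is independent of the starting value of the SDE, one obtains
\[
\E\!\left(\int_{t_{n-1}}^T f(t_n,M^x(t_n))\dd{t_n}\,\bigg|\,\mathcal{F}_{t_{n-1}}\right) \leq C\,\|f\|_{L^p([0,T]\times\R^d)}.
\]
Iterating this $n$ times gives $A_n\leq (C\|f\|_{L^p})^n$, and summing the resulting geometric series already settles the claim in the regime $C\|f\|_{L^p}<1$.

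To cover an arbitrary $R$, I would split $[0,T]$ into finitely many intervals on each of which the above bound is contractive. Since the map $t\mapsto \int_0^t\|f(s,\cdot)\|_{L^p(\R^d)}^p\dd{s}$ is absolutely continuous and bounded by $R^p$, one can find a partition $0=s_0<s_1<\cdots<s_N=T$ such that $C\,\|f\|_{L^p([s_{k-1},s_k]\times\R^d)}\leq 1/2$ for every $k$, with $N$ depending only on $R$, $d$, $p$, $T$ and $C_\sigma$. Writing the exponential as the product of the exponentials over the subintervals and conditioning successively on $\mathcal{F}_{s_{N-1}},\ldots,\mathcal{F}_{s_0}$, the first step applied to the shifted process $(M^x(s_{k-1}+\cdot))$ (via the Markov property) bounds each conditional factor by $2$. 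The tower property then yields the uniform bound $C_R:=2^N$.

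The main potential obstacle is verifying that Lemma \ref{Krylov} remains applicable after conditioning on $\mathcal{F}_{s_{k-1}}$, in the sense that the constant is uniform in the random initial condition $M^x(s_{k-1})$. This is exactly the content of the remark "$C$ is independent of the initial value $x$" in Lemma \ref{Krylov}, so the argument goes through.
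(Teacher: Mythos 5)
Your proof is correct and reproduces the standard Khasminskii-type argument that underlies Lemma 2.1 of Zhang \cite{Zhang2011}, which is precisely what the paper cites for this statement; the series expansion over the ordered simplex together with the conditional Krylov estimate of Lemma \ref{Krylov}, followed by the time-splitting to remove the smallness restriction on $\norm{f}_{L^p}$, is exactly the intended mechanism. One minor simplification: you do not actually need to invoke the Markov property separately, since Lemma \ref{Krylov} is already stated as a conditional estimate given $\mathcal{F}_S$ with a constant independent of the initial value, which is precisely the uniformity the iteration requires.
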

	\begin{proof}
		See Lemma 2.1 in \cite{Zhang2011}.
	\end{proof}
	\begin{Lemma}
		\label{expsupM}
		Assume $\eqref{sigmac}$. Then for any $T>0$ and $0\leq\alpha<(2dC_\sigma T)^{-1}$, it holds
		$$\E\exp\left(\alpha\sup_{0\leq t\leq T}\abs{M^x(t)}^2\right)\leq\frac{4}{\sqrt{1-2\alpha dC_\sigma T}}\exp\left(\frac{\alpha}{1-2\alpha dC_\sigma T}\abs{x(0)}^2\right).$$
	\end{Lemma}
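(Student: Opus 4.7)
The plan is to reduce the $d$-dimensional bound to $d$ one-dimensional Gaussian computations by combining a componentwise decomposition with H\"older's inequality, Dambis--Dubins--Schwarz, and Doob's $L^2$ maximal inequality. The payoff is that each of these tools contributes exactly one of the visible constants in the statement (the factor $4$, the exponent $1/(1-2\alpha dC_\sigma T)$, and the appearance of $d$ in the threshold).

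First, I would write $|M^x(t)|^2=\sum_{i=1}^d(M^{x,i}(t))^2$, whence $\sup_{t\leq T}|M^x(t)|^2\leq\sum_{i=1}^d\sup_{t\leq T}(M^{x,i}(t))^2$. Exponentiating and applying H\"older's inequality with $d$ conjugate exponents all equal to $d$ gives
\[\E\exp\!\left(\alpha\sup_{t\leq T}|M^x(t)|^2\right)\leq\prod_{i=1}^d\!\left(\E\exp\!\left(d\alpha\sup_{t\leq T}(M^{x,i}(t))^2\right)\right)^{1/d}.\]
Each component decomposes as $M^{x,i}(t)=x(0)_i+N^i(t)$ with $N^i$ a continuous local martingale of quadratic variation $\langle N^i\rangle(t)=\int_0^t(\sigma\sigma^\top)^{ii}(s,M^x(s))\dd{s}\leq C_\sigma t$ by Condition~\ref{sigmac}. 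Dambis--Dubins--Schwarz (after enlarging the probability space if necessary) produces a one-dimensional Brownian motion $\tilde{\beta}_i$ with $N^i(t)=\tilde{\beta}_i(\langle N^i\rangle(t))$, so monotonicity of $\langle N^i\rangle$ yields
\[\sup_{t\leq T}(M^{x,i}(t))^2\leq\sup_{u\leq C_\sigma T}\bigl(x(0)_i+\tilde{\beta}_i(u)\bigr)^2.\]

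For $\alpha<(2dC_\sigma T)^{-1}$, the process $u\mapsto\exp\!\bigl(\tfrac{d\alpha}{2}(x(0)_i+\tilde{\beta}_i(u))^2\bigr)$ is a non-negative, $L^2$-bounded submartingale (a convex function of a real-valued martingale), so Doob's $L^2$ maximal inequality gives the factor $4$,
\[\E\sup_{u\leq C_\sigma T}e^{d\alpha(x(0)_i+\tilde{\beta}_i(u))^2}\leq 4\,\E e^{d\alpha(x(0)_i+\tilde{\beta}_i(C_\sigma T))^2}.\]
The remaining expectation is a one-dimensional Gaussian integral; completing the square yields $(1-2d\alpha C_\sigma T)^{-1/2}\exp\!\bigl(d\alpha\,x(0)_i^2/(1-2d\alpha C_\sigma T)\bigr)$. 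Substituting back into the H\"older bound, taking the $d$-th root of each factor, and summing $\sum_i x(0)_i^2=|x(0)|^2$ in the exponent reproduces the right-hand side in the statement exactly.

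The main subtlety is matching the constants. A naive route via $|M^x(t)|^2\leq(1+\varepsilon)|x(0)|^2+(1+\varepsilon^{-1})|M^x(t)-x(0)|^2$ plus any scalar exponential estimate on $|M^x-x(0)|^2$ produces a bound that diverges strictly before $2\alpha dC_\sigma T=1$; it is the componentwise reduction, together with the choice to apply Doob's inequality to the square root $e^{(d\alpha/2)(\cdot)^2}$ rather than to $e^{d\alpha(\cdot)^2}$ itself, that simultaneously delivers the sharp coefficient $\alpha/(1-2\alpha dC_\sigma T)$ in the exponent and the clean multiplicative constant $4$.
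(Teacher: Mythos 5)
Your proof is correct, and it is effectively the argument that the stated constants force. The paper itself gives no proof here — it simply cites Lemma~2.4 in \cite{Stefan1} — so there is no textual comparison to make; but the constant $4$, the factor $(1-2\alpha dC_\sigma T)^{-1/2}$, and the appearance of $d$ in the admissible range of $\alpha$ are precisely the fingerprints of Doob's $L^2$ maximal inequality, a $d$-fold H\"older with equal exponents, and the one-dimensional Gaussian integral $\E\exp\bigl(a(c+Z)^2\bigr)=(1-2a\sigma^2)^{-1/2}\exp\bigl(ac^2/(1-2a\sigma^2)\bigr)$, so your route is the natural one and the cited proof can hardly differ in substance.

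Two minor points worth keeping in mind. First, when invoking Doob in $L^2$ you must know in advance that $\E\exp\bigl(d\alpha(x(0)_i+\tilde\beta_i(u))^2\bigr)<\infty$ for all $u\leq C_\sigma T$; you correctly observe that $2d\alpha C_\sigma T<1$ is exactly what guarantees this, but it is good practice to state it before applying the maximal inequality rather than afterwards. Second, the domination $\sup_{t\leq T}(M^{x,i}(t))^2\leq\sup_{u\leq C_\sigma T}(x(0)_i+\tilde\beta_i(u))^2$ uses only that $\langle N^i\rangle$ is nondecreasing, starts at $0$, and satisfies $\langle N^i\rangle(T)\leq C_\sigma T$; the diagonal estimate $(\sigma\sigma^\top)^{ii}\leq C_\sigma$ coming from $\sigma\sigma^\top\leq C_\sigma I_{d\times d}$ is what you need and is immediate from \eqref{sigmac}. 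Your closing remark about why the componentwise reduction is sharper than a Young-type split of $|M^x(t)|^2$ is accurate and explains why the threshold $(2dC_\sigma T)^{-1}$ is attained exactly.
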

	\begin{proof}
		See Lemma 2.4 in \cite{Stefan1}.
	\end{proof}
	Let $X$ be a $d$-dimensional It\=o-process of the form
	$$\dd{X}(t)=b(t)\dd{t}+\sigma(t)\dd{W}(t)$$
	where $W$ is a $d$-dimensional Brownian motion. Set
	$$a^{ij}(t):=\frac{1}{2}\sigma(t)\sigma(t)^\top$$
	and let $\tau_R$ be the first exit time of $X(t)$ from the ball $B_R$.
	\begin{Lemma}[Krylov's Estimate]
		For every stopping time $\gamma$ and nonnegative Borel function $f:\R_{\geq0}\times\R^d\to\R_{\geq0}$ one has
		\begin{align*}
			&\E\int_{0}^{\gamma\wedge\tau_R}\left(\det a(t)\right)^{\frac{1}{d+1}}f(t,X(t))\dd{t}\\
			\leq&N(d)\left(\mathds{B}^2+\mathds{A}\right)^{\frac{d}{2(d+1)}}\left(\int_{0}^{\infty}\int_{\abs{x}\leq R}f^{d+1}(t,x)\dd{x}\dd{t}\right)^{\frac{1}{d+1}}
		\end{align*}
		where
		$$\mathds{A}:=\E\int_{0}^{\gamma\wedge\tau_R}\operatorname{tr}a(t)\dd{t}, \ \ \ \mathds{B}:=\E\int_{0}^{\gamma\wedge\tau_R}\abs{b(t)}\dd{t}$$
		and $N(d)$ is a constant depending only on the dimension $d$.
	\end{Lemma}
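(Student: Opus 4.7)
The plan is to derive this estimate from the parabolic Alexandrov–Bakel'man–Pucci (ABP) maximum principle via It\^o's formula, following the approach in Krylov's monograph \cite{Krylov}.

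First, by monotone convergence I would reduce to the case $f\in C_c^\infty$ with $\operatorname{supp} f\subset[0,T]\times\bar B_R$, and localize $\gamma$ to be bounded. For any smooth nonnegative $u$ on $[0,\infty)\times\R^d$ vanishing outside $[0,T]\times\bar B_R$, I would apply It\^o's formula to
$$Z(t):=e^{-\varphi(t)}u(y(t),X(t)),$$
using $\dd{y}(t)=r(t)\dd{t}$, $\dd{\varphi}(t)=c(t)\dd{A}(t)$ and $\dd{\langle m^i,m^j\rangle}(t)=2a^{ij}(t)\dd{A}(t)$. After taking expectations at $\gamma\wedge\tau_R$ (the local-martingale part vanishing after the localization) and rearranging, this yields
$$u(0,X(0))\leq\E\!\int_0^{\gamma\wedge\tau_R}\!e^{-\varphi}\bigl[-r\partial_t u-a^{ij}\partial_{ij}u+cu\bigr]\dd{A}+\E\!\int_0^{\gamma\wedge\tau_R}\!e^{-\varphi}\abs{\nabla u}\,\abs{\dd{S}}.$$

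Second, I would take $u$ to solve the parabolic PDE $-r\partial_tu-a^{ij}\partial_{ij}u+cu=g$ on $[0,T]\times B_R$ with zero parabolic boundary data, where $g:=(c^{p-d}r\det a)^{1/(p+1)}\tilde f$ for a nonnegative $\tilde f$ dual to $f$ in $L^{p+1}$. The analytic core of the argument is the sharp parabolic ABP estimate of Krylov, which for such $u$ gives
$$\norm{u}_\infty+\norm{\nabla u}_\infty\leq N(d)\,\bigl(\mathds{S}^2+\mathds{A}\bigr)^{d/(2(p+1))}\norm{\tilde f}_{L^{p+1}}.$$
Substituting back into the It\^o identity and dualising over $\tilde f$ yields exactly the desired bound, the drift contribution being absorbed into $\mathds{S}$ and the main integral into $\mathds{A}$.

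The main obstacle is securing the correct exponents in this intermediate maximum principle. For $p=d$ it reduces to the classical parabolic ABP inequality with weight $(r\det a)^{1/(d+1)}$; the extension to $p>d$ is obtained by inserting the factor $c^{(p-d)/(p+1)}$ through H\"older's inequality, balanced against $e^{-\varphi}$, which effectively trades integrability of the killing rate $c$ for higher integrability of $f$. Tracking the normalisations so as to produce precisely $(\mathds{S}^2+\mathds{A})^{d/(2(p+1))}$ on the right-hand side is the delicate bookkeeping that makes the statement sharp, and this is the step I would allocate the most care to.
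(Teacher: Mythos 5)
The paper does not prove this lemma; it is the general Krylov estimate for It\^o semimartingales cited from \cite{Krylov} (see Theorem 2.2.2 of \emph{Controlled Diffusion Processes}). So I am reviewing your sketch on its own merits, and I believe there is a genuine gap at the heart of your second step.

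You propose to ``take $u$ to solve the parabolic PDE $-r\partial_t u - a^{ij}\partial_{ij}u + cu = g$ on $[0,T]\times B_R$.'' But in this lemma the objects $r(t)$, $c(t)$, $a^{ij}(t)$ are \emph{arbitrary progressively measurable processes}, not deterministic functions of $(t,x)$. For a fixed $\omega$ they are functions of $t$ only, but they depend on the whole sample path; if you solve a PDE whose coefficients are $r(\cdot,\omega)$, $a(\cdot,\omega)$, $c(\cdot,\omega)$ on $[0,T]$, the resulting $u$ is $\mathcal{F}_T$-measurable, hence \emph{not} adapted, and you may not insert it into It\^o's formula applied at intermediate times $t<T$. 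The same confusion is visible in your intermediate bound $\norm{u}_\infty+\norm{\nabla u}_\infty\leq N(d)(\mathds{S}^2+\mathds{A})^{d/(2(p+1))}\norm{\tilde f}_{L^{p+1}}$: $\mathds{S}$ and $\mathds{A}$ are path-space expectations of the random processes, and no deterministic ABP estimate for a PDE with fixed coefficients can output such a quantity. They arise only \emph{after} taking expectations in the It\^o identity, so they cannot already sit in the PDE estimate that feeds that identity.

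The actual argument cannot go through a single linear PDE. The normalization $(c^{p-d}r\det a)^{1/(p+1)}$ exists precisely to make the left-hand side invariant under rescaling of the coefficient processes, and Krylov exploits this by passing to a \emph{nonlinear} object: one constructs a single test function (a convex/Bellman--Monge--Amp\`ere-type supersolution) which is a supersolution of $r\partial_t w + a^{ij}\partial_{ij}w - cw + (c^{p-d}r\det a)^{1/(p+1)}f \leq 0$ \emph{simultaneously for all admissible} $(r,a,c)$, after which the Alexandrov--Bakel'man--Pucci/parabolic maximal function machinery gives the $L^{p+1}$ control of $w$. Your first step (It\^o's formula for $Z=e^{-\varphi}u(y,X)$, yielding $u(0,X(0))$ plus drift and diffusion contributions) is sound and is indeed the starting point, but the duality you then set up is against the wrong object. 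Replacing the fixed linear PDE by the uniform-in-coefficients Bellman construction is not a technicality; it is the central idea of the proof.
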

	\begin{Corollary}
		Assume \eqref{driftc1}, \eqref{sigmac} and let $T>0$. Furthermore, let $(X^x,\tilde{W}^x,\mathds{Q}^x)$ be a solution of equation \eqref{eq} on some time interval $[-r,\tau]$ where $\tau$ is some stopping time with $0\leq\tau\leq T$. Then one has
		$$\E\int_{0}^{\tau}\abs{B(t,X^x)}^2\dd{t}\leq 2C_1\E\left[\sup\limits_{-r\leq t\leq\tau}\abs{X^x(t)}^2\right]+C$$
		where $C=C\left(d,T,C_2,C_\sigma,\norm{F}_{L^{d+1}([0,T]\times\R^d)}\right)$ is some constant.
	\end{Corollary}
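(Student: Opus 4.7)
The plan is to combine Condition \eqref{driftc1} with the Krylov-type estimate stated just above, applied to the semimartingale $X^x$ itself (rather than to the driftless process $M^x$), and then to close the resulting self-referential inequality by Young's inequality. A localisation argument ensures everything is finite before we start.

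Concretely, set $I_R := \E\int_{0}^{\tau\wedge\tau_R}\abs{B(t,X^x)}^2\,\dd{t}$, where $\tau_R$ is the first exit time of $X^x$ from the ball $B_R$. First I would apply \eqref{driftc1} with $t = \tau \wedge \tau_R \leq T$ and take expectations to obtain
$$I_R \leq \E\int_{0}^{\tau\wedge\tau_R}\abs{F(t,X^x(t))}\,\dd{t} + C_1\,\E\sup_{-r\leq t\leq\tau}\abs{X^x(t)}^2 + C_2.$$
Next, I would apply Krylov's estimate to the decomposition $X^x = m + S$ with $m(t)=\int_0^t \sigma(s,X^x(s))\,\dd{\tilde{W}^x}(s)$, $S(t)=\int_0^t B(s,X^x)\,\dd{s}$, $A(t)=t$, $c\equiv0$ and $r\equiv1$ (so that $\varphi\equiv0$ and $y(t)=t$), choosing $p=d$ and $f(t,x)=\abs{F(t,x)}\1_{[0,T]}(t)$. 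By \eqref{sigmac} we have $\det a(t) \geq (2C_\sigma)^{-d}$ and $\operatorname{tr} a(t)\leq dC_\sigma/2$, while Cauchy--Schwarz gives $\mathds{S}^2 \leq T\cdot I_R$ and $\mathds{A}\leq T d C_\sigma/2$. This produces
$$\E\int_{0}^{\tau\wedge\tau_R}\abs{F(t,X^x(t))}\,\dd{t} \leq K_0\bigl(T I_R + T d C_\sigma/2\bigr)^{d/(2(d+1))}\norm{F}_{L^{d+1}([0,T]\times\R^d)}$$
for a constant $K_0=K_0(d,C_\sigma)$.

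Plugging this back into the bound from \eqref{driftc1} and using $(a+b)^\alpha\leq a^\alpha + b^\alpha$ with $\alpha = d/(2(d+1)) < 1/2$, one obtains $I_R \leq K_1 I_R^\alpha + K_2 + C_1 M + C_2$, where $M = \E\sup_{[-r,\tau]}\abs{X^x}^2$ and $K_1, K_2$ depend only on the admissible quantities. Since $\alpha<1$, Young's inequality in the form $K_1 I_R^\alpha \leq \tfrac12 I_R + C_Y$ absorbs the nonlinear term, and we arrive at the $R$-uniform bound $I_R \leq 2 C_1 M + C$. Letting $R\to\infty$ and invoking monotone convergence (here $\tau_R\to\tau$ a.s.\ since $X^x$ is continuous on the compact interval $[-r,\tau]$) yields the claim.

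The main obstacle is of a technical nature: to apply Young's inequality one needs $I_R<\infty$ for every fixed $R$, which is not automatic because the functional drift $B(\cdot,X^x)$ may be unbounded even on paths valued in $B_R$. To circumvent this I would insert an auxiliary stopping time $\sigma_K := \inf\{t\geq0:\int_0^t\abs{B(s,X^x)}^2\,\dd{s}\geq K\}$, carry out the entire argument on $[0,\tau\wedge\tau_R\wedge\sigma_K]$ (where the analogue of $I_R$ is trivially bounded by $K$), derive a bound independent of $K$ and $R$, and then remove the localisations by monotone convergence, first in $K$ and then in $R$. Apart from this extra layer of localisation, the proof is a clean chain of inequalities.
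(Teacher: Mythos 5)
Your argument is correct and follows the same route the paper sketches in two lines: apply condition (C1), estimate $\E\int_0^{\tau}\abs{F(t,X^x(t))}\,\dd{t}$ via Krylov's estimate for the semimartingale $X^x$ itself (with $p=d$, giving the exponent $\tfrac{d}{2(d+1)}<1$ on $\mathds{S}^2+\mathds{A}$ and hence a term $\lesssim I^{d/(2(d+1))}$), and close the resulting self-referential inequality by Young. The only difference is that you spell out the double localisation in $R$ and $K$ needed to justify subtracting $\tfrac{1}{2}\E\int_0^\tau\abs{B(t,X^x)}^2\,\dd{t}$ from both sides, a finiteness point the paper's proof passes over in silence; this is a legitimate and welcome addition, not a deviation.
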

	\begin{proof}
		The proof is similar to the one of Corollary 3.2. in \cite{Gyoengy2001}. By Krylov's estimate and Young's inequality, one has
		\begin{align*}
			&\E\int_{0}^{\tau}\abs{B(t,X^x)}^2\dd{t}\\
			\leq&\E\int_{0}^{\tau}\abs{F(t,X^x(t))}\dd{t}+C_1\E\left[\sup\limits_{-r\leq t\leq\tau}\abs{X^x(t)}^2\right]+C_2\\
			\leq& N(d)\left(\mathds{B}^2+\mathds{A}\right)^{\frac{d}{2(d+1)}}\norm{F}_{L^{d+1}([0,T]\times\R^d)}+C_1\E\left[\sup\limits_{-r\leq t\leq\tau}\abs{X^x(t)}^2\right]+C_2\\
			\leq&\frac{1}{2T}\left(\mathds{B}^2+\mathds{A}\right)+C_1\E\left[\sup\limits_{-r\leq t\leq\tau}\abs{X^x(t)}^2\right]+\tilde{C}\\
			\leq&\frac{1}{2}\E\int_{0}^{\tau}\abs{B(t,X^x)}^2\dd{t}+C_1\E\left[\sup\limits_{-r\leq t\leq\tau}\abs{X^x(t)}^2\right]+C
		\end{align*}
		where $C$ and $\tilde{C}$ are constants depending only on $d$, $T$, $C_2$, $C_\sigma$ and $\norm{F}_{L^{d+1}([0,T]\times\R^d)}$ with
		$$\mathds{A}:=\frac{1}{2}\E\int_{0}^{\tau}\operatorname{tr}\left(\sigma(t,X^x(t))\sigma(t,X^x(t))^\top\right)\dd{t}, \ \ \ \mathds{B}:=\E\int_{0}^{\tau}\abs{B(t,X^x)}\dd{t}.$$
	\end{proof}
	\begin{Corollary}
		\label{squareintX}
		Assume \eqref{driftc1}, \eqref{sigmac} and let $T>0$. Furthermore, let $(X^x,\tilde{W}^x,\mathds{Q}^x)$ be a solution of equation \eqref{eq} on some time interval $[-r,\tau]$ where $\tau$ is some stopping time with $0\leq\tau\leq T$. Then one has
		$$\E\left[\sup\limits_{-r\leq t\leq \tau}\abs{X^x(t)}^2\right]\leq C\left(1+\norm{x}_\infty^2\right)$$
		where $C=C\left(d,T,C_1,C_2,C_\sigma,\norm{F}_{L^{d+1}([0,T]\times\R^d)}\right)$ is some constant.
	\end{Corollary}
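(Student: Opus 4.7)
The plan is to apply It\^o's formula to $|X^x|^2$, localize via stopping times $\tau_N := \inf\{t\geq 0: |X^x(t)|\geq N\}$, estimate the drift by a weighted Young inequality, the martingale by Burkholder--Davis--Gundy together with a second Young, and the quadratic variation via Condition~\eqref{sigmac}; I then close the estimate using the previous Corollary and Gronwall's lemma. By continuity of $X^x$ one has $\tau_N\uparrow\infty$ almost surely, and the quantity $u_N(t) := \E\sup_{0\leq s\leq t\wedge\tau\wedge\tau_N}|X^x(s)|^2\leq N^2$ is finite, which is what legitimates the absorption step below.

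Stopping It\^o's formula for $|X^x|^2$ at $t\wedge\tau\wedge\tau_N$, I obtain for $s\leq t\wedge\tau\wedge\tau_N$
\[
|X^x(s)|^2 = |x(0)|^2 + 2\int_0^s X^x(r)\cdot B(r,X^x)\,\dd{r} + 2\int_0^s X^x(r)\cdot \sigma(r,X^x(r))\,\dd{W}(r) + \int_0^s \norm{\sigma(r,X^x(r))}_{HS}^2\,\dd{r}.
\]
For the drift I use $2|X^x\cdot B|\leq \epsilon|X^x|^2 + \epsilon^{-1}|B|^2$; for the martingale, BDG combined with $\norm{\sigma}_{op}^2\leq C_\sigma$ and Cauchy--Schwarz in the expectation yields $\E\sup_{s}\bigl|2\int_0^s X^x\cdot\sigma\,\dd{W}\bigr|\leq 6\sqrt{T C_\sigma}\,u_N(t)^{1/2}$, which a second Young inequality converts into $\delta\, u_N(t) + C_\delta$ for any $\delta>0$; the bracket part is trivially dominated by $dC_\sigma T$.

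Taking $\sup_{0\leq s\leq t\wedge\tau\wedge\tau_N}$ and then expectation, and inserting the previous Corollary in the form $\E\int_0^{t\wedge\tau\wedge\tau_N}|B|^2\,\dd{r}\leq 2C_1(\norm{x}_\infty^2 + u_N(t)) + C$, I arrive at an estimate of the shape
\[
u_N(t) \leq \left(\frac{2C_1}{\epsilon}+\delta\right)u_N(t) + \epsilon\int_0^t u_N(s)\,\dd{s} + K_1\bigl(1+\norm{x}_\infty^2\bigr),
\]
with $K_1$ independent of $N$. I then choose $\epsilon$ large and $\delta$ small such that $\frac{2C_1}{\epsilon}+\delta<\frac{1}{2}$, absorb the $u_N(t)$-term into the left-hand side, and apply Gronwall's lemma to obtain $u_N(t) \leq K(1+\norm{x}_\infty^2)$ uniformly in $N$. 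Monotone convergence as $N\to\infty$ together with adding $\norm{x}_\infty^2$ for $s\in[-r,0]$ then yields the stated bound.

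The main obstacle to watch for is that the constant $C_1$ from Condition~\eqref{driftc1} is not assumed small: a naive Cauchy--Schwarz estimate $\sup_s|\int_0^s B\,\dd{r}|^2 \leq T\int_0^T|B|^2\,\dd{r}$ combined with the previous Corollary produces a prefactor of $6TC_1$ in front of $u_N(t)$, which only permits absorption for $T<(6C_1)^{-1}$. Passing through It\^o's formula and an $\epsilon$-weighted Young inequality circumvents this by generating the integrated contribution $\int_0^t u_N(s)\,\dd{s}$ that Gronwall's lemma resolves for arbitrary $T$.
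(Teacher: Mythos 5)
Your proof is correct and follows the route the paper only gestures at (``Gronwall's lemma and Doob's maximal inequality'' is the paper's entire proof); the substitution of Burkholder--Davis--Gundy for Doob's $L^2$ maximal inequality is cosmetic. You are right that the naive pathwise bound $\sup_s\bigl|\int_0^s B\,\dd{r}\bigr|^2\leq T\int_0^T|B|^2\,\dd{r}$ does not close for large $T$, and that passing through It\^o's formula with an $\epsilon$-weighted Young inequality is what produces the genuine Gronwall term $\int_0^t u_N(s)\,\dd{s}$ while pushing the $C_1 u_N(t)$ contribution down by $\epsilon^{-1}$ so it can be absorbed; that observation, together with the explicit localization via $\tau_N$, is exactly the content the paper's two-line sketch suppresses.
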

	\begin{proof}
		Applying Gronwall's lemma and Doob's maximal inequality.
	\end{proof}
	\begin{Corollary}
		\label{KrylovX}
		Assume \eqref{driftc1}, \eqref{sigmac} and let $T>0$. Moreover, let\\$(X^x,\tilde{W}^x,\mathds{Q}^x)$ be a weak solution of equation \eqref{eq} on $[-r,\tau]$ for some stopping time $0\leq\tau\leq T$. Then for any Borel function $f:\R^{d+1}\to\R_{\geq0}$ and $q\geq d+1$, one has
		$$\E\int_{0}^{T\wedge\tau}f(t,X^x(t))\dd{t}\leq N\norm{f}_{L^q(T)}$$
		where $N=N\left(d,T,C_1,C_2,C_\sigma,\norm{F}_{L^{d+1}([0,T]\times\R^d)},\norm{x}_\infty\right)$ is a constant.
	\end{Corollary}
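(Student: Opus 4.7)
The plan is to apply the general Krylov estimate stated above directly to the semimartingale $X^x$ itself. Under a weak solution we have the decomposition $X^x(t) = x(0) + m(t) + S(t)$ where $m(t) = \int_0^t \sigma(s,X^x(s))\dd W(s)$ is a continuous local martingale and $S(t) = \int_0^t B(s,X^x)\dd s$ is of finite variation. I would take $A(t) = t$, $r(t) = 1$ and $c(t) = 1$, so that $y(t) = t$, $\varphi(t) = t$ and $a(t) = \tfrac{1}{2}(\sigma\sigma^\top)(t,X^x(t))$. With the choice $p := q-1 \ge d$, Krylov's estimate applied on $[0,\tau\wedge\tau_R]$ gives
$$\E\int_0^{\tau\wedge\tau_R} e^{-t}\bigl(\det a(t)\bigr)^{\frac{1}{p+1}} f(t,X^x(t))\dd{t} \le N(d)\bigl(\mathds S^2 + \mathds A\bigr)^{\frac{d}{2(p+1)}} \norm{f}_{L^q([0,\infty)\times B_R)}.$$

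Next I would use \eqref{sigmac} to absorb the weight on the left-hand side into an absolute constant. The lower bound $\sigma\sigma^\top \ge C_\sigma^{-1} I_{d\times d}$ yields $\det a(t) \ge (2C_\sigma)^{-d}$, and for $t\in[0,T]$ one has $e^{-t} \ge e^{-T}$, so the integrand on the left dominates a positive multiple of $f(t,X^x(t))$. On the right, the upper bound $\operatorname{tr}(\sigma\sigma^\top)\le dC_\sigma$ gives $\mathds A \le \tfrac{d C_\sigma T}{2}$, uniformly in $R$. For $\mathds S$, Cauchy--Schwarz together with the first Corollary after Krylov's estimate and Corollary \ref{squareintX} yields
$$\mathds S^2 \le T\,\E\int_0^\tau \abs{B(t,X^x)}^2\dd{t} \le C\bigl(1+\norm{x}_\infty^2\bigr),$$
again uniformly in $R$. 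Combining these bounds produces
$$\E\int_0^{\tau\wedge\tau_R} f(t,X^x(t))\dd{t} \le N\,\norm{f}_{L^q([0,T]\times\R^d)},$$
with a constant $N$ depending only on the quantities listed in the statement.

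Finally, since $X^x$ is pathwise continuous on $[-r,\tau]$, the exit times $\tau_R$ increase to $+\infty$ $\mathds Q^x$-a.s., and the truncation $\tau\wedge\tau_R$ increases to $\tau$. Monotone convergence on the left-hand side delivers the claimed inequality on $[0,\tau]$ (which is what the statement intends, $\tau$ possibly being strictly less than $T$). I do not anticipate a serious obstacle: the only subtlety is checking that both $\mathds S$ and $\mathds A$ are bounded \emph{uniformly} in the localization parameter $R$, which is exactly what the $L^2$-a-priori estimate on $B(\cdot,X^x)$ furnishes; everything else is a bookkeeping exercise with the weight factors supplied by Krylov's inequality.
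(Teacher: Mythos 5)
Your proof is correct and fills in exactly the argument the paper leaves implicit (its proof is simply ``this follows directly from Krylov's estimate and the Corollaries before''): apply the general Krylov estimate with $A(t)=t$, $r=c=1$, $p=q-1\geq d$, bound $\det a$ from below and $\operatorname{tr}a$ from above via \eqref{sigmac}, bound $\mathds{S}^2$ by Cauchy--Schwarz together with the two preceding a-priori corollaries, and finally let $R\to\infty$ by monotone convergence. The only cosmetic point worth tidying is that the stated Krylov estimate requires $S(0)=m(0)=0$, so one should formally apply it to $X^x-x(0)$ and the translated function $(t,z)\mapsto f(t,z+x(0))$, whose $L^q$ norm is unchanged; this is exactly the shift you implicitly invoke when writing $X^x(t)=x(0)+m(t)+S(t)$.
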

	\begin{proof}
		This follows directly from Krylov's estimate and the Corollaries before.
	\end{proof}
	\begin{Theorem}
		\label{ExUni}
		Assume \eqref{driftc1} and \eqref{sigmac}. Then for every initial values $x\in\Co$, equation \eqref{eq} has a global weak solution. Moreover, for each weak solution $(X^x,\tilde{W}^x,\mathds{Q}^x)$ of equation \eqref{eq} on some time interval $[-r,T]$, $T>0$, one has
		\begin{align*}
		\mathds{Q}_{X^x}^x(A)&=\E_\Prob\bigg[\1_A(M^x)\exp\bigg(\int_{0}^{T}a^x(t)^\top \dd{W}(t)-\frac{1}{2}\int_{0}^{T}\abs{a^x(t)}^2\dd{t}\bigg)\bigg],\\
		a^x(t)&:=\sigma(t,M^x(t))^{-1}B(t,M^x), \ t\in[0,T]
		\end{align*}
		for all measurable $A\subset C([-r,T],\R^d)$.
	\end{Theorem}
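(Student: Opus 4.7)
The plan is to construct a weak solution via Girsanov's theorem starting from the driftless reference process $M^x$, and then to read off the stated representation formula for an arbitrary weak solution by running Girsanov in reverse. For existence, I would set $a^x(t):=\sigma(t,M^x(t))^{-1}B(t,M^x)$ and consider the Dol\'eans--Dade exponential
$$Z^x(t):=\exp\left(\int_0^t a^x(s)^\top\dd{W}(s)-\tfrac12\int_0^t\abs{a^x(s)}^2\dd{s}\right).$$
Using $\norm{\sigma^{-1}}_{op}^2\leq C_\sigma$ together with Condition \eqref{driftc1}, one has
$$\int_0^T\abs{a^x(s)}^2\dd{s}\leq C_\sigma\int_0^T\abs{F(s,M^x(s))}\dd{s}+C_\sigma C_1\sup_{s\in[-r,T]}\abs{M^x(s)}^2+C_\sigma C_2,$$
so controlling $\E\exp(\tfrac12\int_0^T\abs{a^x}^2\dd{s})$ reduces to exponential estimates for these two summands.

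The main obstacle is to verify Novikov's criterion globally on $[0,T]$: Lemma \ref{expfM} handles the $F$-piece (since $F\in L^{d+1}$ and $d+1>(d+2)/2$), but Lemma \ref{expsupM} controls the supremum only under the smallness constraint $\alpha<(2dC_\sigma T)^{-1}$, which need not hold globally. I would circumvent this by partitioning $[0,T]$ into sub-intervals $[t_i,t_{i+1}]$ of length $\delta<(dC_\sigma^2 C_1)^{-1}$ and checking the conditional Novikov criterion $\E[\exp(\tfrac12\int_{t_i}^{t_{i+1}}\abs{a^x(s)}^2\dd{s})\mid\mathcal{F}_{t_i}]<\infty$ $\Prob$-a.s.\ on each piece: Cauchy--Schwarz separates the $F$-part from the supremum part, and the Markov property of $M^x$ rewrites each conditional expectation as an unconditional one with new initial datum $M^x(t_i)$, to which Lemmas \ref{expfM} and \ref{expsupM} then apply. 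Iterating the piecewise identity $\E[Z^x(t_{i+1})/Z^x(t_i)\mid\mathcal{F}_{t_i}]=1$ upgrades $Z^x$ to a true martingale on $[0,T]$. Defining $\dd{\tilde{\mathds{Q}}^x}/\dd{\Prob}:=Z^x(T)$ and $\tilde{W}^x(t):=W(t)-\int_0^t a^x(s)\dd{s}$, Girsanov's theorem makes $\tilde{W}^x$ a $\tilde{\mathds{Q}}^x$-Brownian motion, and inserting $\dd{W}=\dd{\tilde{W}^x}+a^x\dd{t}$ into $\dd{M^x}=\sigma(t,M^x(t))\dd{W}$ identifies $(M^x,\tilde{W}^x,\tilde{\mathds{Q}}^x)$ as a weak solution of \eqref{eq} with initial value $x$.

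For the representation formula and uniqueness in distribution, I would reverse the previous construction. Given an arbitrary weak solution $(X^x,\tilde{W}^x,\mathds{Q}^x)$ on $[-r,T]$, Corollary \ref{squareintX} together with Condition \eqref{driftc1} yields $\E_{\mathds{Q}^x}\int_0^T\abs{B(s,X^x)}^2\dd{s}<\infty$, and the same partitioning argument (now under $\mathds{Q}^x$ with $X^x$ playing the role of $M^x$) will make the reverse density
$$\hat Z(t):=\exp\left(-\int_0^t a_X(s)^\top\dd{\tilde{W}^x}(s)-\tfrac12\int_0^t\abs{a_X(s)}^2\dd{s}\right),\quad a_X(s):=\sigma(s,X^x(s))^{-1}B(s,X^x),$$
a true $\mathds{Q}^x$-martingale. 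Setting $\dd{\hat{\mathds{Q}}}/\dd{\mathds{Q}^x}:=\hat Z(T)$, Girsanov provides a $\hat{\mathds{Q}}$-Brownian motion $\hat W:=\tilde{W}^x+\int_0^\cdot a_X(s)\dd{s}$ under which $\dd{X^x}=\sigma(t,X^x(t))\dd{\hat W}$, and by strong uniqueness for this driftless SDE (which holds under Condition \eqref{sigmac}) the joint law of $(X^x,\hat W)$ under $\hat{\mathds{Q}}$ coincides with the joint law of $(M^x,W)$ under $\Prob$. Since $\hat Z(T)^{-1}=\exp(\int_0^T a_X(s)^\top\dd{\hat W}(s)-\tfrac12\int_0^T\abs{a_X(s)}^2\dd{s})$ is a measurable functional of $(X^x,\hat W)$, transporting through this distributional identification and using $\dd{\mathds{Q}^x}/\dd{\hat{\mathds{Q}}}=\hat Z(T)^{-1}$ produces
$$\mathds{Q}^x_{X^x}(A)=\E_{\hat{\mathds{Q}}}[\hat Z(T)^{-1}\1_A(X^x)]=\E_\Prob\left[\1_A(M^x)\exp\left(\int_0^T a^x(s)^\top\dd{W}(s)-\tfrac12\int_0^T\abs{a^x(s)}^2\dd{s}\right)\right],$$
which is the claimed identity, and since its right-hand side does not depend on the chosen weak solution, uniqueness in distribution follows.
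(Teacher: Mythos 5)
Your existence argument is fine and matches the paper: partition $[0,T]$ so that on each subinterval the drift is small enough for Lemmas \ref{expfM} and \ref{expsupM} to yield Novikov, use the Markov property of $M^x$ to reduce the conditional Novikov check to the unconditional one, and patch the measures together by consistency. This is what the paper does, only stated more briefly.

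The representation/uniqueness half, however, has a genuine gap. You propose to run ``the same partitioning argument (now under $\mathds{Q}^x$ with $X^x$ playing the role of $M^x$)'' to verify that $\hat Z$ is a true $\mathds{Q}^x$-martingale. This cannot be lifted directly. First, the Khasminskii-type exponential estimates that make the partitioning argument work are available for $M^x$ (Lemmas \ref{expfM} and \ref{expsupM}) but not for $X^x$ under $\mathds{Q}^x$: the analogues for $X^x$ are Lemmas \ref{expfX} and \ref{expsupX}, which both require $C_1=0$ and, more importantly, are themselves \emph{proved using Theorem \ref{ExUni}}, so invoking them here is circular. Second, $X^x$ under $\mathds{Q}^x$ is a functional SDE solution and hence not an $\R^d$-valued Markov process, so the conditioning-at-partition-points step (which reduces the conditional Novikov bound to an unconditional one with restarted initial datum) is not available in the form you use it. Knowing $\E_{\mathds{Q}^x}\int_0^T\abs{B(s,X^x)}^2\dd{s}<\infty$ from Corollary \ref{squareintX} gives nothing near an exponential moment, so Novikov for $\hat Z$ remains unverified.

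The paper's fix, which you should adopt, is to localize by the stopping times $\tau^n:=\inf\{t:\int_0^t\abs{B(s,\omega)}^2\dd{s}\geq n\}\wedge T$. On $[0,\tau^n]$ the reverse Girsanov density trivially satisfies Novikov (the exponent is bounded by $C_\sigma n/2$), so the change of measure is legitimate on each stopped process $X^{x,n}$, pathwise uniqueness of the driftless SDE identifies $X^{x,n}$ with $M^{x,n}$ up to $\tau^n$, and one then passes $n\to\infty$ using $\tau^n\uparrow T$ a.s.\ (which follows since $\int_0^T\abs{B(s,X^x)}^2\dd{s}<\infty$ a.s.). This sidesteps any a priori exponential moment bound on $X^x$ entirely. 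The rest of your argument --- transporting the density through the distributional identification of $(X^x,\hat W)$ with $(M^x,W)$ --- is the right idea but needs to be executed at the level of the stopped processes and then limited, exactly as in the paper.
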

	\begin{proof}
		At first, we show the existence of a weak solution. The strong solution $M^x$ is by definition $\left(\mathcal{F}_t\right)_{t\geq0}$-adapted where $\left(\mathcal{F}_t\right)_{t\geq0}$ is the augmented filtration generated by $W$. Next, we construct a probability measure on
		$$\mathcal{F}_\infty:=\sigma\left(\mathcal{F}_t:t\geq0\right)$$
		such that $M^x$ is a global weak solution for equation \eqref{eq}. By Lemma \ref{expfM}, Lemma \ref{expsupM}, conditions \eqref{driftc1} and \eqref{sigmac}, there exist for each $T>0$ a partition $0=T_0\leq T_1,\dots\leq T_{n-1}\leq T_n=T$, $n\in\N$ with	
		$$\E_\Prob\exp\left(\frac{1}{2}\int_{T_{i-1}}^{T_i}\abs{\sigma(t,M^x(t))^{-1}B(t,M^x)}^2\dd{t}\right)<\infty, \ i=1,\dots,n.$$
		Therefore, Novikov's condition is fulfilled for each subinterval, which gives that
		\begin{align*}
		t\mapsto\exp\bigg(&\int_{t\wedge T_{i-1}}^{t\wedge T_i}\left(\sigma(s,M^x(s))^{-1}B(s,M^x)\right)^\top\dd{W}(s)\\
		&-\frac{1}{2}\int_{t\wedge T_{i-1}}^{t\wedge T_i}\abs{\sigma(s,M^x(s))B(s,M^x)}^2\dd{s}\bigg)
		\end{align*}
		is a martingale for $i=1,\dots,n$. Consequently,
		\begin{align*}
		t\mapsto\exp\bigg(&\int_{0}^{t}\left(\sigma(s,M^x(s))^{-1}B(s,M^x)\right)^\top\dd{W}(s)\\
		&-\frac{1}{2}\int_{0}^{t}\abs{\sigma(t,M^x(s))B(t,M^x)}^2\dd{s}\bigg)
		\end{align*}
		is a martingale and by Girsanov's theorem,
		$$\bar{W}(t):=W(t)-\int_{0}^{t}\sigma(s,M^x(s))^{-1}B(s,M^x)\dd{s}, \ t\geq0$$
		is a Brownian motion on $[0,T]$ under the probability measure
		\begin{align*}
		\dd{\bar{\Prob}_T}:=&\exp\bigg(\int_{0}^{T}\left(\sigma(t,M^x(t))^{-1}B(t,M^x)\right)^\top\dd{W}(t)\\
		&-\frac{1}{2}\int_{0}^{T}\abs{\sigma(t,M^x(t))B(t,M^x)}^2\dd{t}\bigg)\dd{\Prob}
		\end{align*}
		and $(M^x,\bar{W},\bar{\Prob}_T)$ is a weak solution of \eqref{eq} on $[-r,T]$ for each $T>0$. Additionally, one has for $0<T_1<T_2$
		$$\bar{\Prob}_{T_1}(A)=\bar{\Prob}_{T_2}(A) \ \forall A\in\mathcal{F}_{T_1},$$
		so the probability measure on $\mathcal{F}_\infty$ uniquely defined by
		$$\bar{\Prob}(A):=\Prob_T(A) \ \forall T>0, A\in\mathcal{F}_T$$
		is indeed well-defined and $(M^x,\bar{W},\bar{\Prob})$ is a global weak solution.
		
		Now, let $(X^x,\tilde{W}^x,\mathds{Q}^x)$ be a weak solution on some time interval $[0,T]$, $T>0$. The following approach is inspired by the techniques used in \cite{liptser2001statistics}. Define
		$$\tau^n(\omega):=\inf\left\{t\geq0:\int_{0}^{t}\abs{B(s,\omega)}^2\dd{s}\geq n\right\}\wedge T, \ \omega\in C([-r,T],\R^d),\ n\in\N.$$
		Then the stopped process $X^{x,n}(t):=X^x(t\wedge\tau^n(X^x))$, $t\in[-r,T]$ fulfills the equation
		\begin{align*}
			\dd{X^{x,n}}(t)=\1_{t\leq\tau^n(X^{x,n})}B(t,X^{x,n})\dd{t}+\1_{t\leq\tau^n(X^{x,n})}\sigma(t,X^{x,n}(t))\dd{\tilde{W}^x}
		\end{align*}
		By construction, Novikov's condition is fulfilled. Consequently, Girsanov's theorem is applicable and
		$$\tilde{W}^{x,n}(t):=\int_{0}^{t\wedge\tau^n(X^{x,n})}\sigma(s,X^{x,n}(s))^{-1}B(s,X^{x,n})\dd{s}+\tilde{W}^x(t), \ t\geq0$$
		is a Brownian motion with respect to the probability measure
		\begin{align*}
			\dd{\mathds{Q}^{x,n}}:=&\exp\bigg(-\int_{0}^{\tau^n(X^{x,n})}\left(\sigma(t,X^{x,n}(t))^{-1}B(t,X^{x,n})\right)^\top\dd{\tilde{W}^x}(t)\\
			&-\frac{1}{2}\int_{0}^{\tau^n(X^{x,n})}\abs{\sigma(t,X^{x,n}(t))B(t,X^{x,n})}^2\dd{t}\bigg)\dd{\mathds{Q}^x}.
		\end{align*}
		The process $X^{x,n}$ solves the equation
		\begin{align*}
			\dd{X^{x,n}}(t)&=\sigma(t,X^{x,n}(t))\dd{\tilde{W}^{x,n}}(t), \ t\in[0,\tau^n(X^{x,n})],\\
			X^{x,n}_0&=x.
		\end{align*}
		Such a solution is (locally) pathwise unique , i.e.
		$$X^{x,n}(t)=M^{x,n}(t), \ t\in[-r,\tau^n(X^{x,n})]$$
		where $M^{x,n}$ is the unique strong solution of
		\begin{align*}
			\dd{M^{x,n}}(t)&=\sigma(t,M^{x,n}(t))\dd{\tilde{W}^{x,n}}(t),\\
			M^{x,n}_0&=x.
		\end{align*}
		and it holds
		$$\tau^n(X^{x,n})=\tau^n(M^{x,n})\text{ a.s.}$$
		Moreover, $\mathds{Q}^x$ and $\mathds{Q}^{x,n}$ are equivalent. Thus,
		\begin{align*}
			&\mathds{Q}^x(X^x\in A)\\
			=&\lim\limits_{n\to\infty}\mathds{Q}^x(\tau^n(X^x)=T,X^x\in A)\\
			=&\lim\limits_{n\to\infty}\mathds{Q}^x(\tau^n(X^{x,n})=T,X^{x,n}\in A)\\
			=&\lim\limits_{n\to\infty}\E_{\mathds{Q}^{x,n}}\bigg[\1_{\tau^n(X^{x,n})=T}\1_A(X^{x,n})\exp\bigg(\int_{0}^{T}\left(\sigma(t,X^{x,n}(t))^{-1}B(t,X^{x,n})\right)^\top\dd{\tilde{W}^{x,n}}(t)\\
			&-\frac{1}{2}\int_{0}^{T}\abs{\sigma(t,X^{x,n}(t))^{-1}B(t,X^{x,n})}^2\dd{t}\bigg)\bigg]\\
			=&\lim\limits_{n\to\infty}\E_{\mathds{Q}^{x,n}}\bigg[\1_{\tau^n(M^{x,n})=T}\1_A(M^{x,n})\exp\bigg(\int_{0}^{T}\left(\sigma(t,M^{x,n}(t))^{-1}B(t,M^{x,n})\right)^\top\dd{\tilde{W}^{x,n}}(t)\\
			&-\frac{1}{2}\int_{0}^{T}\abs{\sigma(t,M^{x,n}(t))^{-1}B(t,M^{x,n})}^2\dd{t}\bigg)\bigg]\\
			=&\lim\limits_{n\to\infty}\E_\Prob\bigg[\1_{\tau^n(M^x)=T}\1_A(M^x)\exp\bigg(\int_{0}^{T}\left(\sigma(t,M^x(t))^{-1}B(t,M^x)\right)^\top\dd{W}^x(t)\\
			&-\frac{1}{2}\int_{0}^{T}\abs{\sigma(t,M^x(t))^{-1}B(t,M^x)}^2\dd{t}\bigg)\bigg]\\
			=&\E_\Prob\bigg[1_A(M^x)\exp\bigg(\int_{0}^{T}\left(\sigma(t,M^x(t))^{-1}B(t,M^x)\right)^\top\dd{W}^x(t)\\
			&-\frac{1}{2}\int_{0}^{T}\abs{\sigma(t,M^x(t))^{-1}B(t,M^x)}^2\dd{t}\bigg)\bigg]
		\end{align*}
		for all measurable $A\subset C([-r,T],\R^d)$.
	\end{proof}
	\begin{Lemma}
		\label{expfX}
		 Assume \eqref{driftc1} with $C_1=0$, \eqref{sigmac} and let $T>0$, $q\geq d+1$ be given. Moreover, let $(X^x,\tilde{W}^x,\mathds{Q}^x)$ be a weak solution of equation \eqref{eq} on $[-r,\tau]$ for some stopping time $0\leq\tau\leq T$. Then one has
		$$\sup\limits_{f\in L^q([0,T]\times\R^d):\norm{f}_{L^q}\leq R}\E_{\mathds{Q}^x}\exp\left(\int_{0}^{\tau}f(t,X^x(t))\dd{t}\right)<\infty$$
		for all $R>0$.
	\end{Lemma}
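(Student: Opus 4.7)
The strategy is Khasminskii's lemma applied to the non-decreasing additive process $\int_0^\cdot f(s,X^x(s))\dd{s}$, with a conditional version of Corollary \ref{KrylovX} replacing its unconditional counterpart. Without loss of generality assume $f \geq 0$ (replacing $f$ by $|f|$ preserves the $L^q$-norm and only enlarges the exponential moment) and that $X^x$ extends to all of $[-r, T]$ with $\tau = T$: if not, use the global weak existence in Theorem \ref{ExUni} to extend, and note that $\int_0^\tau f\dd{s}\leq \int_0^T f\dd{s}$ because $f\geq 0$.

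The key step is a conditional analogue of Corollary \ref{KrylovX}: for every stopping time $0\leq S\leq T$ and nonnegative Borel $g$,
$$\E_{\mathds{Q}^x}\left[\int_S^T g(t,X^x(t))\dd{t}\,\Big|\,\mathcal{F}_S\right] \leq N\,\norm{g\,\1_{[S,T]}}_{L^q([0,T]\times\R^d)} \quad \text{a.s.,}$$
with a \emph{deterministic} constant $N$ (in particular independent of the random segment $X^x_S$). This is precisely where the hypothesis $C_1=0$ is used: tracing through the earlier corollaries, the $\norm{x}_\infty$-dependence in the constant of Corollary \ref{KrylovX} enters only through the quantity $\mathds{S}=\E\int\abs{B(t,X^x)}\dd{t}$ appearing in Krylov's estimate; the first corollary of this section, with $C_1=0$, bounds $\E\int_0^T\abs{B(t,X^x)}^2\dd{t}\leq C$ independently of $\norm{x}_\infty$, so that $\mathds{S}\leq \sqrt{TC}$ is deterministic. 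Applying Corollary \ref{KrylovX} to the shifted weak solution on $[S,T]$ starting from $X^x_S$ therefore yields the conditional estimate above with a constant that does not involve $\norm{X^x_S}_\infty$.

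With the deterministic conditional bound in hand, partition $[0,T]$ adaptively by $0=t_0<t_1<\cdots<t_n=T$ so that $N\,\norm{f\,\1_{[t_{k-1},t_k]}}_{L^q}\leq 1/2$ for each $k$. Since $\sum_k \norm{f\,\1_{[t_{k-1},t_k]}}_{L^q}^q = \norm{f}_{L^q}^q \leq R^q$, this is possible with $n\leq (2NR)^q$. Khasminskii's lemma applied conditionally on each subinterval yields
$$\E_{\mathds{Q}^x}\left[\exp\left(\int_{t_{k-1}}^{t_k}f(s,X^x(s))\dd{s}\right)\,\Big|\,\mathcal{F}_{t_{k-1}}\right]\leq 2\quad \text{a.s.,}$$
and iterating via the tower property produces the uniform bound $\E_{\mathds{Q}^x}\exp(\int_0^T f(s,X^x(s))\dd{s})\leq 2^n \leq 2^{(2NR)^q}$, as required. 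The main obstacle is thus establishing the conditional Krylov estimate with a \emph{deterministic} constant; without the $C_1=0$ hypothesis the constant would depend on the random $\norm{X^x_S}_\infty$, and Khasminskii's iteration could not proceed cleanly.
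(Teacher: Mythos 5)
Your approach is genuinely different from the paper's. You apply Khasminskii's lemma directly to the singular process $X^x$, iterating a conditional Krylov estimate, whereas the paper transfers the whole problem to the driftless process $M^x$ via the Girsanov representation of Theorem \ref{ExUni}: it writes $\E_{\mathds{Q}^x}\exp(\int_0^\tau f(t,X^x(t))\dd{t})$ as an expectation under $\Prob$ involving $M^x$ and the exponential density $D^x(T)$, applies Cauchy--Schwarz twice to separate the $f$-part from the density, and then invokes Lemma \ref{expfM} (Khasminskii for the driftless $M^x$) to bound both resulting factors, using $C_1=0$ so that $\int_0^T|a^x|^2\dd{t}$ is controlled by $\int_0^T|F(t,M^x(t))|\dd{t}+C_2$. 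Your observation that $C_1=0$ is exactly what makes the constant in Corollary \ref{KrylovX} independent of $\norm{x}_\infty$ is correct and is the same mechanism that the paper exploits, just in a different place.

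However, there is a genuine gap in your argument at its central step. You assert the conditional estimate
$$\E_{\mathds{Q}^x}\Bigl[\textstyle\int_S^T g(t,X^x(t))\dd{t}\,\big|\,\mathcal{F}_S\Bigr]\le N\,\norm{g\1_{[S,T]}}_{L^q}\quad\text{a.s.}$$
by ``applying Corollary \ref{KrylovX} to the shifted weak solution on $[S,T]$ starting from $X^x_S$.'' But Corollary \ref{KrylovX} is an \emph{unconditional} estimate for a fixed deterministic initial segment $x\in\Co$. To upgrade it to an a.s.\ conditional estimate with random initial segment $X^x_S$, you need to know that, for a.e.\ $\omega$, the regular conditional law of $(X^x(t))_{t\in[S,T]}$ given $\mathcal{F}_S$ is that of a weak solution of \eqref{eq} with initial segment $X^x_S(\omega)$. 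This restart/Markov property for the functional SDE with singular drift is precisely the kind of statement that, in this paper, is only available \emph{after} the Girsanov representation and uniqueness-in-law of Theorem \ref{ExUni} are in place; it is not a consequence of Corollary \ref{KrylovX} by itself, and it cannot be taken for granted for a merely weak solution. By contrast, the paper's Lemma \ref{Krylov} provides the a.s.\ conditional Krylov estimate directly and only for $M^x$, where the Markov property is standard (Lipschitz, non-degenerate SDE); Girsanov then carries the bound over to $X^x$ without ever needing a conditional Krylov estimate for the singular process. So your scheme would be salvageable, but closing the gap essentially forces you back through the Girsanov route that the paper uses, at which point the paper's Cauchy--Schwarz computation is shorter and cleaner than an iterated conditional Khasminskii argument.
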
	
	\begin{proof}
		Let
		$$a^x(t):=\sigma(t,M^x(t))^{-1}B(t,M^x), \ t\in[0,T].$$
		Analogous proceeding as in proof of Theorem \ref{ExUni} gives
		\begin{align*}
			&\E_{\mathds{Q}^x}\exp\left(\int_{0}^{\tau}f(t,X^x(t))\dd{t}\right)\\
			\leq&\E_{\Prob}\exp\bigg(\int_{0}^{T}f(t,M^x(t))\dd{t}+\int_{0}^{T}a^x(t)^\top\dd{W}(t)-\frac{1}{2}\int_{0}^{T}\abs{a^x(t)}^2\dd{t}\bigg)\\
			\leq&\left[\E_{\Prob}\exp\left(\int_{0}^{T}2f(t,M^x(t))\dd{t}\right)\right]^\frac{1}{2}\bigg[\E_\Prob\exp\bigg(2\int_{0}^{T}a^x(t)^\top\dd{W}(t)-\int_{0}^{T}\abs{a^x(t)}^2\dd{t}\bigg)\bigg]^\frac{1}{2}\\
			\leq&\left[\E_{\Prob}\exp\left(\int_{0}^{T}2f(t,M^x(t))\dd{t}\right)\right]^\frac{1}{2}
			\cdot\left[\E_\Prob\exp\left(6\int_{0}^{T}\abs{a^x(t)}^2\dd{t}\right)\right]^\frac{1}{4}.
		\end{align*}
		The uniform bound follows from condition \eqref{driftc1} and Lemma \ref{expfM}.
	\end{proof}
	\begin{Lemma}
		\label{expsupX}
		 Assume \eqref{driftc1} with $C_1=0$, \eqref{sigmac} and and let $T>0$ be given. Let $(X^x,\tilde{W}^x,\mathds{Q}^x)$ be a weak solution of equation \eqref{eq} on $[-r,\tau]$ for some stopping time $0\leq\tau\leq T$. Then the following inequality holds.
		\begin{align*}
			&\E_{\mathds{Q}^x}\exp\left(\alpha\sup\limits_{-r\leq t\leq\tau}\abs{X^x(t)}^2\right)\\
			\leq&\frac{C}{\sqrt[4]{1-4\alpha dC_\sigma T}}\exp\left(\frac{\alpha}{1-4\alpha dC_\sigma T}\norm{x}_\infty^2\right)
		\end{align*}
		for all $0\leq\alpha<(4dC_\sigma T)^{-1}$ and a constant $C=C\left(d,T,C_2,C_\sigma,\norm{F}_{L^{d+1}([0,T]\times\R^d)}\right)$.
	\end{Lemma}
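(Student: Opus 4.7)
The plan is to mirror the Girsanov transfer argument used in the proof of Lemma \ref{expfX} and combine it with the exponential moment estimate for the driftless process $M^x$ from Lemma \ref{expsupM}. Proceeding exactly as in the proof of Lemma \ref{expfX}, the Girsanov formula of Theorem \ref{ExUni} (applied after extending the weak solution from $[-r,\tau]$ to $[-r,T]$ via Theorem \ref{ExUni}, which is harmless since the supremum is monotone in the interval) yields
$$\E_{\mathds{Q}^x}\exp\left(\alpha\sup\limits_{-r\leq t\leq\tau}\abs{X^x(t)}^2\right)\leq\E_\Prob\left[\exp\left(\alpha\sup\limits_{-r\leq t\leq T}\abs{M^x(t)}^2\right)\mathcal{E}^x\right],$$
where $\mathcal{E}^x:=\exp\bigl(\int_0^T a^x(s)^\top\dd{W}(s)-\tfrac{1}{2}\int_0^T\abs{a^x(s)}^2\dd{s}\bigr)$ with $a^x$ as in Lemma \ref{expfX}.

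The key idea for obtaining the clean exponent stated in the lemma is to separate the deterministic initial segment via
$$\sup\limits_{-r\leq t\leq T}\abs{M^x(t)}^2=\max\!\left(\norm{x}_\infty^2,\sup\limits_{0\leq t\leq T}\abs{M^x(t)}^2\right)$$
together with $\exp(\alpha\max(a,b))\leq\exp(\alpha a)+\exp(\alpha b)$. Since $\E_\Prob\mathcal{E}^x=1$ (as established during the proof of Theorem \ref{ExUni} via the subinterval Novikov localisation), the $\norm{x}_\infty^2$-part contributes simply $\exp(\alpha\norm{x}_\infty^2)$, while for the remaining piece Cauchy--Schwarz gives
$$\E_\Prob\!\left[\exp\!\left(\alpha\sup_{0\leq t\leq T}\abs{M^x(t)}^2\right)\mathcal{E}^x\right]\leq\left[\E_\Prob\exp\!\left(2\alpha\sup_{0\leq t\leq T}\abs{M^x(t)}^2\right)\right]^{\!1/2}\!\!\left[\E_\Prob(\mathcal{E}^x)^2\right]^{\!1/2}.$$

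The first factor is directly handled by Lemma \ref{expsupM} with parameter $2\alpha\in[0,(2dC_\sigma T)^{-1})$, producing, after taking the square root, the prefactor $2(1-4\alpha dC_\sigma T)^{-1/4}$ and the exponent $\frac{\alpha}{1-4\alpha dC_\sigma T}\abs{x(0)}^2\leq\frac{\alpha}{1-4\alpha dC_\sigma T}\norm{x}_\infty^2$. The second factor is controlled exactly as for the drift density in the proof of Lemma \ref{expfX}: a further Cauchy--Schwarz combined with the supermartingale property of the Girsanov exponential (so that $\E_\Prob\exp(4\int a^x\dd{W}-8\int\abs{a^x}^2)\leq 1$) reduces $\E_\Prob(\mathcal{E}^x)^2$ to a power of $\E_\Prob\exp(2\int_0^T\abs{a^x(s)}^2\dd{s})$; condition \eqref{driftc1} with $C_1=0$ together with \eqref{sigmac} gives $\abs{a^x(t)}^2\leq C_\sigma\abs{F(t,M^x(t))}+C_\sigma C_2$, and Lemma \ref{expfM} provides a uniform bound depending only on $d,T,C_2,C_\sigma,\norm{F}_{L^{d+1}([0,T]\times\R^d)}$. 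Both summands then fit into a single bound of the claimed form because the prefactor $(1-4\alpha dC_\sigma T)^{-1/4}\geq 1$ and $\alpha\leq\frac{\alpha}{1-4\alpha dC_\sigma T}$, so $\exp(\alpha\norm{x}_\infty^2)$ is absorbed into the other summand.

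The main technical subtlety is the splitting step in the second paragraph. A single Cauchy--Schwarz applied directly to the supremum over the full interval $[-r,T]$ would yield the exponent $\frac{2\alpha(1-2\alpha dC_\sigma T)}{1-4\alpha dC_\sigma T}\norm{x}_\infty^2$, which is strictly larger than the stated $\frac{\alpha}{1-4\alpha dC_\sigma T}\norm{x}_\infty^2$ throughout the admissible range $\alpha\in(0,(4dC_\sigma T)^{-1})$. Decoupling the deterministic contribution $\norm{x}_\infty^2$ from the genuinely random part via the $\max$-to-sum inequality, and exploiting $\E_\Prob\mathcal{E}^x=1$ to evaluate the deterministic piece exactly, is what circumvents this factor-of-two loss.
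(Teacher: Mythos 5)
Your proof is correct and follows the same skeleton as the paper's: transfer from $\mathds{Q}^x$ to $\Prob$ via the density formula of Theorem \ref{ExUni}, separate the initial segment $\norm{x}_\infty^2$ from the genuinely random supremum over $[0,T]$, Cauchy--Schwarz off the Girsanov density, control the density factor with the supermartingale property and Lemma \ref{expfM} via condition \eqref{driftc1} with $C_1=0$, and control the supremum factor with Lemma \ref{expsupM} at parameter $2\alpha$. The one variation you introduce — applying the bound $\exp(\alpha\max(a,b))\le e^{\alpha a}+e^{\alpha b}$ \emph{before} Cauchy--Schwarz and evaluating the deterministic summand exactly through $\E_\Prob\mathcal{E}^x=1$ — is a clean and valid way to obtain the stated exponent, and it is more explicit than the paper, which jumps from $\bigl[\E_\Prob\exp(2\alpha\sup_{-r\le t\le T}\abs{M^x(t)}^2)\bigr]^{1/2}$ straight to the final bound without spelling out how the $[-r,0]$ segment is detached.

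One caveat on your concluding remark: the claim that applying Cauchy--Schwarz first to the full supremum over $[-r,T]$ \emph{necessarily} produces the exponent $\frac{2\alpha(1-2\alpha dC_\sigma T)}{1-4\alpha dC_\sigma T}\norm{x}_\infty^2$ is not accurate. That worse exponent only arises if one subsequently bounds the max multiplicatively via $\exp(2\alpha\max(a,b))\le e^{2\alpha a}e^{2\alpha b}$. Using instead $\exp(2\alpha\max(a,b))\le e^{2\alpha a}+e^{2\alpha b}$ \emph{inside} the Cauchy--Schwarz bracket, together with $\exp(2\alpha\norm{x}_\infty^2)\le\exp\bigl(\tfrac{2\alpha}{1-4\alpha dC_\sigma T}\norm{x}_\infty^2\bigr)$ and $1\le 4(1-4\alpha dC_\sigma T)^{-1/2}$, again yields $\frac{C}{\sqrt[4]{1-4\alpha dC_\sigma T}}\exp\bigl(\tfrac{\alpha}{1-4\alpha dC_\sigma T}\norm{x}_\infty^2\bigr)$ after taking the square root — which is presumably what the paper's implicit last step does. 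So the paper's ordering is not deficient; it is simply terse. Your reordering is a matter of taste rather than necessity, though it does have the minor advantage of producing the deterministic summand with no constant-loss at all.
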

	\begin{proof}
		As before, let
		$$a^x(t):=\sigma(t,M^x(t))^{-1}B(t,M^x), \ t\in[0,T].$$
		By the assumed conditions and Lemma \ref{expfM}, one has
		\begin{align*}
			&\E_\Prob\exp\left(6\int_{0}^{T}\abs{a^x(t)}^2\dd{t}\right)\\
			\leq&K_1\E_\Prob\exp\left(6C_\sigma\int_{0}^{T}\abs{B(t,M^x)}^2\dd{t}\right)\\
			\leq&K_2\E_\Prob\exp\left(6C_\sigma\int_{0}^{T}\abs{F(t,M^x(t))}\dd{t}\right)\\
			\leq&K_3
		\end{align*}
		for constants $K_1$, $K_2$ and $K_3$ that only depend on $d$, $T$, $C_2$, $C_\sigma$, $\norm{F}_{L^{d+1}([0,T]\times\R^d)}$. By Theorem \ref{ExUni}, one obtains
		\begin{align*}
			&\E_{\mathds{Q}^x}\left(\alpha\sup\limits_{-r\leq t\leq\tau}\abs{X^x(t)}^2\right)\\
			\leq&\E_\Prob\exp\left(\alpha\sup\limits_{-r\leq t\leq T}\abs{M^x(t)}^2+\int_{0}^{T}a^x(t)^\top\dd{W}(t)-\frac{1}{2}\int_{0}^{T}\abs{a^x(t)}^2\dd{t}\right)\\
			\leq&\left[\E_\Prob\exp\left(2\alpha\sup\limits_{-r\leq t\leq T}\abs{M^x(t)}^2\right)\right]^{\frac{1}{2}}\cdot\left[\E_\Prob\exp\left(2\int_{0}^{T}a^x(t)^\top\dd{W}(t)-\int_{0}^{T}\abs{a^x(t)}^2\dd{t}\right)\right]^{\frac{1}{2}}\\
			\leq&\left[\E_\Prob\exp\left(2\alpha\sup\limits_{-r\leq t\leq T}\abs{M^x(t)}^2\right)\right]^{\frac{1}{2}}\cdot\left[\E_\Prob\exp\left(6\int_{0}^{T}\abs{a^x(t)}^2\dd{t}\right)\right]^{\frac{1}{4}}\\
			\leq&\frac{C}{\sqrt[4]{1-4\alpha dC_\sigma T}}\exp\left(\frac{\alpha}{1-4\alpha dC_\sigma T}\norm{x}_\infty^2\right)
		\end{align*}
		for a constant $C=C\left(d,T,C_2,C_\sigma,\norm{F}_{L^{d+1}([0,T]\times\R^d)}\right)$.
	\end{proof}
	\section{Strong Feller Property}
	The following theorem is a consequence of a log-Harnack inequality that has been shown in \cite{WangYuan} and requires the Lipschitz-continuity of $\sigma$ in space.
	\begin{Theorem}
		\label{martfeller}
		Assume \eqref{sigmac}. Then one has for all $t>r$
		$$\lim\limits_{y\to x}\E f(M_t^y)=\E f(M^x_t) \ \forall f\in B_b(\Co).$$
	\end{Theorem}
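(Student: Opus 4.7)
The plan is to derive the strong Feller property for $M^x$ from the log-Harnack inequality of Wang and Yuan \cite{WangYuan}.

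First, I would exploit the memoryless structure of $M^x$: the SDE governing $M^x$ on $[0,\infty)$ is non-functional with $M^x(0)=x(0)$, and for $t>r$ every time $t+s$ with $s\in[-r,0]$ lies in $[t-r,t]\subset(0,t]$. Consequently, the law of the segment $M^x_t$, viewed as a $\Co$-valued random variable, depends on $x$ only through $x(0)\in\R^d$; so the strong Feller property on $\Co$ is really a question about the finite-dimensional initial point.

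Second, I would quote the log-Harnack inequality from \cite{WangYuan}: under \eqref{sigmac}, for all $t>r$ there is a function $\Phi(t,\cdot,\cdot):\Co\times\Co\to\R_{\geq0}$ with $\Phi(t,x,y)\to 0$ as $y\to x$ such that
$$(P_t\log f)(y)\leq\log(P_tf)(x)+\Phi(t,x,y)$$
for every strictly positive $f\in B_b(\Co)$, where $P_tf(x):=\E f(M^x_t)$.

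Third, I would carry out the standard derivation of total-variation continuity from a log-Harnack inequality. Let $\nu_z$ denote the law of $M_t^z$ on $\Co$; the log-Harnack inequality gives $\nu_y\ll\nu_x$, and applying it to the density $\rho:=d\nu_y/d\nu_x$ (noting that $P_t\rho(x)=\nu_y(\Co)=1$) yields the relative entropy bound
$$H(\nu_y\mid\nu_x)=(P_t\log\rho)(y)\leq\Phi(t,x,y).$$
Pinsker's inequality then supplies $\norm{\nu_y-\nu_x}_{TV}^2\leq 2\Phi(t,x,y)$, so that $y\to x$ forces $\lim_{y\to x}\E f(M_t^y)=\E f(M_t^x)$ for every $f\in B_b(\Co)$.

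The main obstacle is the log-Harnack inequality itself, which in \cite{WangYuan} is proved by a Girsanov-based coupling argument that crucially uses the Lipschitz continuity of $\sigma$ in space from \eqref{sigmac}; once that is cited, both the memoryless reduction and the Pinsker step are routine.
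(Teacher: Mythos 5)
Your proposal is correct and matches the paper's approach exactly: the paper offers no written proof of Theorem \ref{martfeller}, only the remark preceding it that the result "is a consequence of a log-Harnack inequality that has been shown in \cite{WangYuan}," and you are simply unfolding that consequence. The route from log-Harnack to strong Feller via mutual absolute continuity, the relative-entropy identity $H(\nu_y\mid\nu_x)=(P_t\log\rho)(y)$, and Pinsker's inequality is the standard one, and your steps are sound (to apply the log-Harnack inequality to the generally unbounded density $\rho$ one first applies it to truncations such as $(\rho\wedge n)\vee n^{-1}$ and passes to the limit, but this is routine). Two small remarks: the opening observation that for $t>r$ the law of $M^x_t$ depends on $x$ only through $x(0)$ is correct and worth keeping in mind, but it is not actually used in the rest of your argument since Wang--Yuan's inequality is stated directly on $\Co\times\Co$; and it is worth noting that the Wang--Yuan coupling argument delivers $\Phi(t,x,y)$ of the form $C(t)\,\norm{x-y}_\infty^2$ for $t>r$, which is where the restriction $t>r$ in the theorem enters.
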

	\begin{Lemma}
		\label{bconv}
		Assume \eqref{driftc1}, \eqref{sigmac}, \eqref{driftc2} and \eqref{boundedmemory}. Then one has
		$$\lim\limits_{y\to x}\Prob\left(\int_{0}^{T}\abs{B(t,M^x_t)-B(t,M^y_t)}^2\dd{t}>\varepsilon\right)=0 \ \forall\varepsilon>0.$$
	\end{Lemma}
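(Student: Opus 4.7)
My plan is to prove the stronger statement
$$\E\int_0^T\abs{B(t,M^x_t)-B(t,M^y_t)}^2\dd{t}\to0\text{ as }y\to x,$$
from which the claim follows via Markov's inequality. By Vitali's convergence theorem on the measure space $(\Omega\times[0,T],\Prob\otimes\dd{t})$, it suffices to verify (a) uniform integrability of the family $\bigl\{(\omega,t)\mapsto\abs{B(t,M^y_t(\omega))}^2\bigr\}$ for $y$ in a neighbourhood of $x$ -- which, via $(a+b)^2\leq 2(a^2+b^2)$, automatically yields uniform integrability of the squared differences -- and (b) convergence in $\Prob\otimes\dd{t}$-measure of $B(t,M^y_t)\to B(t,M^x_t)$.

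For (a), applying condition \eqref{driftc2} to $M^y$ and taking expectation gives
$$\E\int_0^T H\!\left(\abs{B(t,M^y_t)}^2\right)\dd{t}\leq\E\int_0^T\abs{\tilde F(t,M^y(t))}\dd{t}+\E\, G\!\left(\sup_{-r\leq s\leq T}\abs{M^y(s)}\right).$$
Lemma \ref{Krylov} bounds the first term uniformly in $y$ (since $\tilde F\in L^{d+1}_{\mathrm{loc}}$), and Lemma \ref{expsupM} combined with the monotonicity of $G$ bounds the second for $y$ in a bounded neighbourhood of $x$. Since $H(R)/R\to\infty$, de la Vall\'ee Poussin's criterion yields the required uniform integrability.

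For (b), I split $[0,T]=[0,r)\cup[r,T]$. Lipschitz continuity of $\sigma$ together with Gronwall's lemma shows $M^y\to M^x$ uniformly on $[-r,T]$ in $L^2(\Prob)$; in particular, for every $t\in[0,T]$ one has $M^y_t\to M^x_t$ in $\Co$ in probability. On $[0,r)$, where \eqref{driftc2} asserts continuity of $B(t,\cdot)$, this yields $B(t,M^y_t)\to B(t,M^x_t)$ in probability pointwise in $t$, and bounded convergence gives $\int_0^r\Prob(\abs{B(t,M^y_t)-B(t,M^x_t)}>\delta)\dd{t}\to 0$.

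The main obstacle is the interval $[r,T]$, where $B(t,\cdot)$ is only measurable. Here I invoke Theorem \ref{martfeller}: for each $t>r$, the law of $M^y_t$ converges to that of $M^x_t$ in total variation. Lusin's theorem applied to $B(t,\cdot)\colon\Co\to\R^d$ -- measurable on the Polish space $\Co$ endowed with the Radon probability $\Prob\circ(M^x_t)^{-1}$ -- yields, for every $\varepsilon>0$, a compact set $K^\varepsilon_t\subset\Co$ with $\Prob(M^x_t\notin K^\varepsilon_t)<\varepsilon$ on which $B(t,\cdot)$ is (uniformly) continuous. The total variation bound transfers the same estimate, up to $\varepsilon$, to $M^y_t$ for $y$ sufficiently close to $x$. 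On the event $\{M^x_t,M^y_t\in K^\varepsilon_t\}$, uniform continuity of $B(t,\cdot)|_{K^\varepsilon_t}$ combined with $\norm{M^y_t-M^x_t}_\infty\to 0$ in probability forces $\abs{B(t,M^y_t)-B(t,M^x_t)}<\delta$ with probability tending to one. Letting $\varepsilon\to 0$ yields $\Prob(\abs{B(t,M^y_t)-B(t,M^x_t)}>\delta)\to 0$ for every $t\in(r,T]$, and bounded convergence on $[r,T]$ completes the argument.
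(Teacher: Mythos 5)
Your overall architecture is the same as the paper's: uniform integrability of the $B$-terms obtained from the de la Vall\'ee Poussin--type condition \eqref{driftc2}, combined with convergence in $\Prob\otimes\lambda$-measure, the latter split across $[0,r)$ (continuity from \eqref{driftc2}) and $[r,T]$ (using Theorem~\ref{martfeller}). For the measure-convergence on $[r,T]$ your Lusin/total-variation argument is a hands-on re-derivation of what the paper obtains more directly by quoting Theorem~\ref{goodconv}; this part is fine, and it is a reasonable substitute since strong Feller at $t>r$ is exactly total-variation convergence of the laws of $M^y_t$.

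The genuine gap is in step (a). You cannot bound $\E\int_0^T H(|B(t,M^y_t)|^2)\,\dd t$ directly by applying \eqref{driftc2} and taking expectations, for two independent reasons. First, condition \eqref{driftc2} only gives $\tilde F\in L^{d+1}_{\mathrm{loc}}$, not $\tilde F\in L^{d+1}$, so Lemma~\ref{Krylov} cannot be applied to $\tilde F$ without first localizing. Second, $G$ is merely monotone increasing with no growth bound; Lemma~\ref{expsupM} controls only Gaussian-type moments $\E\exp(\alpha\sup|M^y|^2)$ with $\alpha$ small, so $\E\,G\bigl(\sup_{-r\le s\le T}|M^y(s)|\bigr)$ may well be infinite (take $G(R)=\exp(\exp R)$). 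Consequently the stronger claim you aim for, $\E\int_0^T|B(t,M^x_t)-B(t,M^y_t)|^2\,\dd t\to0$, need not even be finite, let alone convergent. The paper repairs exactly this by truncating: it replaces $B$ by $B^R(t,x):=\1_{\sup_{-r\le s\le t}|x(s)|^2<R}\,B(t,x_t)$, which forces $\tilde F$ to be evaluated only on a bounded set (so $L^{d+1}_{\mathrm{loc}}$ suffices for Krylov) and makes $G$ effectively bounded, giving uniform integrability of $\{|B^R(\cdot,M^y_\cdot)|^2\}$; it then controls the discrepancy between $B$ and $B^R$ by the uniform exit probability $\sup_{\|z\|\le 2\|x\|_\infty}\Prob(\sup_{t\le T}|M^z(t)|\ge R)\to0$. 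You should insert this truncation step before invoking Vitali, and correspondingly downgrade your target from $L^1$-convergence to convergence in probability of $\int_0^T|B-B|^2$, which is all the lemma asks for.
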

	\begin{proof}
		By Theorems \ref{martfeller} and \ref{goodconv}, one has for all $t>r$
		$$\lim\limits_{y\to x}\E\abs{f(M^x_t)-f(M^y_t)}=0 \ \forall f\in B_b(\Co).$$
		Therefore, one has for all $f\in B_b\left([0,T]\times\Co\right)$
		$$\lim\limits_{y\to x}\E\int_{r}^{T}\abs{f(t,M^x_t)-f(t,M^y_t)}\dd{t}=0.$$
		Consequently,
		$$\lim\limits_{y\to x}\Prob\otimes\lambda_{|[r,T]}\left(\abs{B(\cdot,M^y_{\cdot})-B(\cdot,M^x_{\cdot})}>\varepsilon\right)=0 \ \forall\varepsilon>0.$$
		By condition \eqref{driftc2}, one has also
		$$\lim\limits_{y\to x}\Prob\otimes\lambda_{|[0,r]}\left(\abs{B(\cdot,M^y_{\cdot})-B(\cdot,M^x_{\cdot})}>\varepsilon\right)=0 \ \forall\varepsilon>0.$$
		Therefore, it holds
		$$\lim\limits_{y\to x}\Prob\otimes\lambda_{|[0,T]}\left(\abs{B(\cdot,M^y_{\cdot})-B(\cdot,M^x_{\cdot})}>\varepsilon\right)=0 \ \forall\varepsilon>0.$$
		Now, define for $R>0$
		$$B^R\left(t,x\right):=\1_{\sup\limits_{-r\leq s\leq t}\abs{x(s)}^2<R}B\left(t,x_t\right), \ x\in C\left(\R_{\geq-r},\R^d\right)$$
		Each $B^R$, $R>0$ fulfills condition \eqref{driftc1} and the suitably modified version of \eqref{driftc2} with bounded $G$. It follows
		$$\sup\limits_{y\in\Co}\E\int_{0}^{T}H\left(\abs{B^R(t,M^y)}^2\right)\dd{t}<\infty, \ R>0$$
		by Lemma \ref{Krylov}, \ref{expsupM} and condition \eqref{driftc2}. Hence, $\left\{\abs{B^R(\cdot,M^y_\cdot)}^2:y\in\Co\right\}$ is uniformly integrable for each $R>0$. Since 
		$$\lim\limits_{y\to x}\E\norm{M^y_t-M^x_t}_\infty \ \forall t\geq0,$$
		it holds
		$$\lim\limits_{y\to x}\Prob\otimes\lambda_{|[0,T]}\left(\abs{B^R(\cdot,M^y)-B^R(\cdot,M^x)}>\varepsilon\right)=0 \ \forall\varepsilon>0.$$
		Furthermore,
		$$\lim\limits_{R\to\infty}\sup\limits_{x\in\Co,\norm{x}\leq\tilde{R}}\Prob\left(\sup\limits_{t\in[-r,T]}\abs{M^x(t)}\geq R\right)=0 \ \forall \tilde{R}>0$$
		holds by condition \eqref{sigmac}. Thus,
		\begin{align*}
			&\limsup\limits_{y\to x}\Prob\left(\int_{0}^{T}\abs{B(t,M^x(t))-B(t,M^y(t))}^2\dd{t}>\varepsilon\right)\\
			\leq&\limsup\limits_{y\to x}\Prob\left(\int_{0}^{T}\abs{B^R(t,M^x(t))-B^R(t,M^y(t))}^2\dd{t}>\varepsilon\right)\\
			&+2\sup\limits_{z\in\Co,\norm{z}\leq2\norm{z}_\infty}\Prob\left(\sup\limits_{t\in[-r,T]}\abs{M^z(t)}\geq R\right)\\
			=&2\sup\limits_{z\in\Co,\norm{z}\leq2\norm{z}_\infty}\Prob\left(\sup\limits_{t\in[-r,T]}\abs{M^z(t)}\geq R\right)
		\end{align*}
		Now, one can let $R\to\infty$, which proofs the claim.
	\end{proof}
	\begin{proof}[Proof of Theorem \ref{StrongFeller}]
		Let $t>r$ and $f\in B_b(\Co)$, then one has by Theorem \ref{ExUni}
		\begin{align*}
			&\E_{\mathds{Q}^x}f(X_t^x)-\E_{\mathds{Q}^y}f(X_t^y)\\
			=&\E_\Prob(D^x(t)f(M_t^x))-\E_\Prob(D^y(t)f(M_t^y))\\
			=&\E_\Prob[D^x(t)(f(M_t^x)-f(M_t^y))]+\E_\Prob[(D^x(t)-D^y(t))f(M_t^y)]\\
			\leq&\E_\Prob[D^x(t)(f(M_t^x)-f(M_t^y))]+\norm{f}_\infty\E_\Prob\abs{D^x(t)-D^y(t)}
		\end{align*}
		where we define for every $z\in\Co$
		\begin{align*}
			a^z(t)&:=\sigma(t,M^z(t))^{-1}B(t,M_t^z),\\
			D^z(t)&:=\exp\left(\int_{0}^{t}a^z(s)^\top\dd{W}(s)-\frac{1}{2}\int_{0}^{t}\abs{a^z(s)}^2\dd{s}\right).
		\end{align*}
		By condition \eqref{sigmac}, It\=o's formula and the stochastic Gronwall Lemma \ref{Gronwall}, it holds
		\begin{align}
			\label{convprobM}
			\lim\limits_{y\to x}\Prob\left(\abs{M^y_t-M^x_t}>\varepsilon\right)=0 \ \forall\varepsilon>0.
		\end{align}
		Applying Theorems \ref{goodconv} and \ref{martfeller}, gives
		$$\lim\limits_{y\to x}\E_\Prob\abs{f(M_t^y)-f(M_t^x)}=0$$
		and in particular,
		$$\lim\limits_{y\to x}\Prob\left(\abs{D^x(t)f(M_t^{x_n})-D^x(t)f(M_t^x)}>\varepsilon\right)=0 \ \forall\varepsilon>0.$$
		By the dominated convergence theorem, it follows
		$$\lim\limits_{y\to x}\E_\Prob[D^x(t)(f(M_t^y)-f(M_t^x))]=0.$$
		Consequently, it remains to show that
		$$\lim\limits_{y\to x}\E_\Prob\abs{D^y(t)-D^x(t)}=0.$$
		Since one has $\E_\Prob D^z(t)=1$ for all $z\in\Co$, it suffices to show
		$$\lim\limits_{y\to x}\Prob\left(\abs{D^y(t)-D^x(t)}>\varepsilon\right)=0 \ \forall\varepsilon>0$$
		by standard measure theoretic arguments. Therefore, it is sufficient to show
		$$\lim\limits_{y\to x}\Prob\left(\int_{0}^{t}\abs{a^y(s)-a^x(s)}^2\dd{s}>\varepsilon\right)=0 \ \forall \varepsilon>0$$
		by the martingale isometry. One has
		\begin{align*}
			&\int_{0}^{t}\abs{a^y(s)-a^x(s)}^2\dd{s}\\
			\leq&2\int_{0}^{t}\norm{\sigma\left(s,M^y(s)\right)^{-1}-\sigma\left(s,M^x(s)\right)^{-1}}^2_{op}\abs{B(s,M^x_s)}^2\dd{s}\\
			&+2C_\sigma\int_{0}^{t}\abs{B(s,M^y_s)-B(s,M^x_s)}^2\dd{t}.
		\end{align*}
		The second term converges to zero by the assumed conditions and Lemma \ref{bconv}. Moreover,
		$$\lim\limits_{y\to x}\Prob\otimes\lambda_{|[0,t]}\left(\norm{\sigma\left(\cdot,M^y(\cdot)\right)^{-1}-\sigma\left(\cdot,M^x(\cdot)\right)^{-1}}_{op}>\varepsilon\right)=0 \ \forall\varepsilon>0$$
		holds by \eqref{convprobM}, the continuity of $\sigma$ in space and the continuity of the inverting map $A\mapsto A^{-1}$ on the space of invertible matrices. Additionally, one can bound the first integrand by
		$$2C_\sigma\abs{B(\cdot,M^x_\cdot)}^2,$$
		which is $\Prob\otimes\lambda_{|[0,t]}$-integrable by Lemma \ref{Krylov}. Consequently, one can apply the dominated convergence theorem and the proof is complete.
	\end{proof}
	\section{Pathwise Uniqueness and Stability}
	\begin{Notation}
		We introduce - as in \cite{Zhang2011}- the following function space. For $p,\in(1,\infty)$ and $0\leq S<T$, denote by $W_p^{1,2}\left([S,T]\times\R^d\right)$ the closure of compactly supported, smooth functions on $[S,T]\times\R^d$ with respect to the norm
		$$\norm{u}_{W_p^{1,2}\left([S,T]\times\R^d\right)}:=\norm{\partial_tu}_{L^p[S,T]}+\norm{u}_{L^p([S,T];W^{2,p})}, \ u\in C_c^\infty([S,T]\times\R^d).$$
	\end{Notation}
	Let $p:=2d+2$. By Theorem \ref{PDE}, for every $0<T\leq T_0$, there exists a solution
	$$\tilde{u}(\cdot;T)\in\left(W_{p}^{1,2}\left([0,T_0]\times\R^d\right)\right)^d$$
	of the coordinatewise PDE system
	\begin{align*}
		\partial_t\tilde{u}(t,x;T)+L_t\tilde{u}(t,x;T)+b(t,x)&=0,\\
		\tilde{u}(T,x;T)&=0
	\end{align*}
	for all $t\in[0,T]$ and $x\in\R^d$ where
	$$L_tv(t,x):=\frac{1}{2}\sum_{i,j,k=1}^{d}\sigma^{i,k}(t,x)\sigma^{j,k}(t,x)\partial_i\partial_jv(t,x)+b(t,x)\cdot\nabla v(t,x), \ v\in W_p^{1,2}\left([0,T_0]\times\R^d\right).$$
	Additionally, it holds
	$$\sup\limits_{T\in[0,T_0]}\norm{\tilde{u}^i(\cdot;T)}_{W_p^{1,2}\left([0,T]\times\R^d\right)}<\infty, \ i=1,\dots,d$$
	and by the embedding Theorem \ref{embedding}, there exists a uniform $\delta$ such that for all $0\leq S\leq T$ with $T-S\leq\delta$
	$$\abs{\tilde{u}(t,x;T)-\tilde{u}(t,y;T)}\leq\frac{1}{2}\abs{x-y}$$
	for all $t\in[S,T]$ and $x,y\in\R^d$. Furthermore, the function
	$$u(t,x;T):=\tilde{u}(t,x;T)+x$$
	satisfies coordinatewise the equation
	\begin{align*}
		\partial_tu(t,x;T)+L_tu(t,x;T)&=0,\\
		u(T,x;T)&=x.
	\end{align*}
	\begin{proof}[Proof of Theorem \ref{pathwiseuniq}]
		Let $(X^x,W)$ and $(\hat{X}^x,W)$ be two weak solutions of equation \eqref{eq} with initial value $x\in\Co$ for some common Brownian motion $W$ on the time interval $[0,\tau]$ for some stopping time $\tau$. By localization, we can assume that condition \eqref{driftc1} is fulfilled with $C_1=0$ and that $\tau$ is bounded by some $T_0>0$. Choose $\delta>0$ like above with the additional restraint $\delta<r_{\tilde{B}}$. By induction, it suffices to prove for every $0\leq S\leq T\leq T_0$ with $T-S\leq\delta$
		\begin{align*}
			&X_{|[-r,S\wedge\tau]}^x=\hat{X}^x_{|[-r,S\wedge\tau]}\\
			\implies&X_{T\wedge\tau}^x=\hat{X}_{T\wedge\tau}^x.
		\end{align*}
		For the sake of simplicity, we write $u(\cdot):=u(\cdot;T)$. Furthermore, define
		\begin{align*}
			Y(t)&:=u(t,X(t)), \ S\wedge\tau\leq t\leq T\wedge\tau,\\
			\hat{Y}(t)&:=u(t,\hat{X}^x(t)), \ S\wedge\tau\leq t\leq T\wedge\tau.
		\end{align*}
		By the choice of $\delta$, one has for the difference processes $Z(t):=X^x(t)-\hat{X}^x(t)$ and $\tilde{Z}(t):=Y(t)-\hat{Y}(t)$
		$$\frac{1}{2}\abs{\tilde{Z}(t)}\leq\abs{Z(t)}\leq\frac{3}{2}\abs{\tilde{Z}(t)}, \ S\wedge\tau\leq t\leq T\wedge\tau.$$
		Due to Lemma \ref{expfX}, Lemma \ref{Ito} is applicable, which gives for $S\wedge\tau\leq t\leq T\wedge\tau$
		\begin{align*}
			\tilde{Z}(t)=&\int_{S}^{t}\left(Du(s,X^x(s))\tilde{B}(s,X^x)-D u(s,\hat{X}^x(s))\tilde{B}(s,\hat{X}^x)\right)\dd{s}\\
			&+\int_{S}^{t}\left(Du(s,X^x(s))\sigma(s,X^x(s))-Du(s,\hat{X}^x(s))\sigma(s,\hat{X}^x(s))\right)\dd{W}(s)\\
			=&\int_{S}^{t}\left(Du(s,X^x(s))\tilde{B}(s,X^x)-D u(s,\hat{X}^x(s))\tilde{B}(s,X^x)\right)\dd{s}\\
			&+\int_{S}^{t}\left(Du(s,X^x(s))\sigma(s,X^x(s))-Du(s,\hat{X}^x(s))\sigma(s,\hat{X}^x(s))\right)\dd{W}(s)
		\end{align*}
		and consequently
		\begin{align*}
			&\dd{\abs{\tilde{Z}}^2}(t)\\
			=&2\tilde{Z}(t)^\top\left(Du(t,X^x(t))\tilde{B}(t,X^x)-Du(t,\hat{X}^x(t))\tilde{B}(t,X^x)\right)\dd{t}\\
			&+2\tilde{Z}(t)^\top\left(Du(t,X^x(t))\sigma(t,X^x(t))-Du(t,X^{x_n}(t))\sigma(t,X^{x_n}(t))\right)\dd{W}(t)\\
			&+\norm{Du(t,X^x(t))\sigma(t,X^x(t))-Du(t,\hat{X}^x(t))\sigma(t,\hat{X}^x(t))}_{HS}^2\dd{t}
		\end{align*}
		Using the boundedness of $Du$, condition \eqref{driftc1} and Young's inequality gives for $S\leq t_1\leq t_2\leq T$
		\begin{align*}
			&\abs{\tilde{Z}(t_2)}^2-\abs{\tilde{Z}(t_1)}^2\\
			\leq&\int_{t_1}^{t_2}\abs{\tilde{Z}(s)}\norm{Du(s,X^x(s))-Du(s,X^{x_n}(s))}_{op}\abs{\tilde{B}(s,X^x)}\dd{s}\\
			+&c\int_{t_1}^{t_2}\tilde{Z}(s)^\top\left(Du(s,X^x(s))\sigma(s,X^x(s))-Du(s,X^{x_n}(s))\sigma(s,X^{x_n}(s))\right)\dd{W}(s)\\
			+&c\int_{t_1}^{t_2}\norm{Du(s,X^x(s))\sigma(s,X^x(s))-Du(s,X^{x_n}(s))\sigma(s,X^{x_n}(s))}_{HS}^2\dd{s}
		\end{align*}
		where $c>0$ is a constant. As in \cite{Fedrizzi2}, one can use a suitable multiplier of the form $\exp(-A(t))$ where $A$ is an adapted, continuous process. Here, we choose
		\begin{align*}
			A(t)&:=c\int_{S}^{t}\abs{\tilde{B}(s,X^x)}\frac{\norm{Du(s,X^x(s))-Du(s,\hat{X}^x(s))}_{op}}{\abs{\tilde{Z}(s)}}\1_{\tilde{Z}(s)\neq0}\dd{s}\\
			&+c\int_{S}^{t}\frac{\norm{Du(s,X^x(s))\sigma(s,X^x(s))-Du(s,\hat{X}^x(s))\sigma(s,\hat{X}^x(s))}_{HS}^2}{\abs{\tilde{Z}(s)}^2}\1_{\tilde{Z}(s)\neq0}\dd{s}
		\end{align*}
		for $S\leq t\leq T$.
		To show that $A$ is indeed well defined - namely finite - it suffices to show
		$$\E \exp\left(\frac{1}{2}A(T)\right)<\infty.$$
		Since $u$ belongs coordinatewise to $ W_p^{1,2}\left([0,T_0]\times\R^d\right)$ and by conditions \eqref{sigmac}, it holds
		$$(Du\cdot\sigma)^{i,j}\in L^p\left(T_0;W^{1,p}\left(\R^d\right)\right), \ i,j=1,\dots,d.$$
		Additionally, $C^\infty_c\left(\R^{d+1}\right)$ is dense in $L^p\left(T_0;W^{1,p}\left(\R^d\right)\right)$. Hence, by Young's inequality, Lemma \ref{expfX} and Lemma \ref{expsupX}, it suffices to show for all $\tilde{R}>0$ the existence of a constant $C_{R}>0$ such that
		$$\E\exp\left(\int_{S}^{T}\frac{\abs{f(s,X^x(s))- f(s,\hat{X}^x(s))}^2}{\abs{\tilde{Z}(s)}^2}\1_{\tilde{Z}(s)\neq0}\dd{s}\right)\leq C_{\tilde{R}}$$ 
		for all $f\in C^\infty\left(\R^{d+1}\right)$ with $\norm{f}_{L^p\left(T_0;W^{1,p}\left(\R^d\right)\right)}\leq R$. By Lemmas \ref{expfX} and \ref{Hardy-Littlewood}, one obtains
		\begin{align*}
			&\E\exp\left(\int_{S}^{T}\frac{\abs{f(s,X^x(s))- f(s,\hat{X}^x(s))}^2}{\abs{\tilde{Z}(s)}^2}\1_{\tilde{Z}(s)\neq0}\dd{s}\right)\\
			\leq&\E\exp\left(C_d^2\int_{S}^{T}\left(\mathcal{M}\abs{\nabla f}(X^x(s))+\mathcal{M}\abs{\nabla f}(\hat{X}^x(s))\right)^2\dd{s}\right)\\
			<\infty.
		\end{align*}
		Now, it holds for $S\leq t\leq T$
		\begin{align*}
			e^{-A(t)}\abs{\tilde{Z}(t)}^2\leq&c\int_{S}^{t}e^{-A(s)}\abs{\tilde{Z}(s)}^2\dd{s}+\text{local martingale}
		\end{align*}
		by the It\=o formula. Applying the stochastic Gronwall Lemma \ref{Gronwall} gives
		$$\E\left[\sup_{t\in[S,T]}e^{-\frac{1}{2}A(t)}\abs{\tilde{Z}(t)}\right]=0,$$
		which finishes the proof.
	\end{proof}
	The following result is a rather technical one, which will be used to proof Theorem \ref{stability}.
	\begin{Proposition}
		\label{BXconv}
		Assume \eqref{driftc1}, \eqref{sigmac}, \eqref{strictpast}, \eqref{driftc2} and \eqref{boundedmemory}. Furthermore, let $X^x$, $x\in\Co$ be the strong solutions to equation \eqref{eq} with initial value $x$ and assume that
		\begin{align}
			\label{XSconv}
			\lim\limits_{y\to x}\Prob\left(\norm{X_S^y-X^x_S}_\infty>\varepsilon\right)=0 \ \forall\varepsilon>0,\forall x\in\Co
		\end{align}
		for some $S\geq0$. Then one has for each $R>0$
		$$\lim\limits_{n\to\infty}\E\int_{S\wedge\tau_{R}^{x,y}}^{(S+r_{\tilde{B}})\wedge\tau_{R}^{x,y}}\abs{\tilde{B}(s,X_s^y)-\tilde{B}(s,X_s^x)}^2\dd{s}=0$$
		where
		$$\tau_{R}^{x,y}:=\sup\left\{t\geq0:\sup\limits_{-r\leq s\leq t}\abs{X^x(s)}^2<R,\sup\limits_{-r\leq s\leq t}\abs{X^y(s)}^2<R\right\}.$$
	\end{Proposition}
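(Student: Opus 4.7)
The plan is to exploit \eqref{strictpast}: combined with \eqref{boundedmemory}, it implies that for $s \in [S, S+r_{\tilde{B}}]$ the functional $\tilde{B}(s, X^y)$ depends on $X^y$ only through its restriction to $[s-r, s-r_{\tilde{B}}] \subseteq [S-r, S]$. Hence one may write $\tilde{B}(s, X^y) = \Phi_s(X_S^y)$ for measurable functionals $\Phi_s : \Co \to \R^d$ that are insensitive to the values of $X^y$ beyond time $S$. The task thus reduces to controlling $\abs{\Phi_s(X_S^y) - \Phi_s(X_S^x)}^2$ using only the hypothesis that $X_S^y \to X_S^x$ in probability in $\Co$.

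The first step is a spatial truncation providing uniform integrability. On $\{t \le \tau_R^{x,y}\}$ the sup-norms of $X^y$ and $X^x$ up to $t$ are bounded by $\sqrt{R}$, so the integrand is unchanged if one replaces $\tilde{B}(s, X)$ by $\tilde{B}^R(s, X) := \1_{\sup_{-r \le u \le s}\abs{X(u)}^2 < R}\,\tilde{B}(s, X_s)$. The truncated drift inherits \eqref{driftc2} with a bounded $G$, so Corollary \ref{KrylovX} applied to the strong solutions $X^y$ yields
$$\sup_{\norm{y}_\infty \le 2\norm{x}_\infty} \E \int_{0}^{S+r_{\tilde{B}}} H\bigl(\abs{\tilde{B}^R(s, X^y)}^2\bigr) \dd{s} < \infty,$$
and the super-linear growth of $H$ then gives uniform integrability of the family $\{\abs{\tilde{B}^R(\cdot, X^y)}^2\}_y$ with respect to $\Prob \otimes \lambda_{|[S, S+r_{\tilde{B}}]}$.

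The second, main step is to upgrade the hypothesized convergence $X_S^y \to X_S^x$ in probability on $\Co$ to $\Prob \otimes \lambda_{|[S, S+r_{\tilde{B}}]}$-convergence in measure of $\tilde{B}^R(s, X^y)$ to $\tilde{B}^R(s, X^x)$. This is the main obstacle, because $\Phi_s$ is only measurable and the continuous mapping theorem does not apply directly. Following the strategy of Lemma \ref{bconv}, one approximates $\Phi_s$ by bounded continuous functionals on the Polish space $\Co$ in $L^2$ of the law of $X_S^x$ (available by density of $C_b(\Co)$), obtains convergence in measure for the continuous approximations via continuous mapping, and combines the uniform integrability from the previous step with the convergence framework for random variables in topological spaces from \cite{Stefan2} to pass to the limit. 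Vitali's convergence theorem then delivers the desired convergence of the integral.
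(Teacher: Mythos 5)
There is a genuine gap in your main step. Writing $\tilde{B}(s,X^y)=\Phi_s(X_S^y)$ and truncating to get uniform integrability are both correct and match the paper, but your plan for showing $\Phi_s(X_S^y)\to\Phi_s(X_S^x)$ in probability does not close. Approximating $\Phi_s$ by continuous $\phi$ in $L^2$ of the law of $X_S^x$ controls $\E\abs{\Phi_s(X_S^x)-\phi(X_S^x)}^2$, but gives you no handle on $\E\abs{\Phi_s(X_S^y)-\phi(X_S^y)}^2$: $X_S^y\to X_S^x$ in probability does not make the laws of $X_S^y$ converge to that of $X_S^x$ setwise, so the approximation error can be large under the law of $X_S^y$ even if it is small under the law of $X_S^x$. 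The missing ingredient the paper uses is Theorem \ref{StrongFeller} (proved independently in Section 3): for $S>r$ it gives $\E f(X_S^y)\to\E f(X_S^x)$ for all bounded measurable $f$, in particular for indicators of open sets, which is precisely hypothesis (1b) of Theorem \ref{goodconv}; combined with \eqref{XSconv} as hypothesis (1a), that theorem yields $\E\abs{f(X_S^y)-f(X_S^x)}\to0$ for all bounded measurable $f$, and plugging in a bounded function of the \emph{measurable} functional $g(t,\cdot)$ gives the convergence in probability you need. You invoke ``the convergence framework from \cite{Stefan2}'' but never supply its second hypothesis, and the continuous-approximation detour cannot substitute for it.

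You also omit the case $S\leq r$, where Theorem \ref{StrongFeller} is unavailable (it is stated only for $t>r$). There the paper uses the continuity assumption in \eqref{driftc2} for $t\in[0,r)$, together with \eqref{XSconv} and continuous mapping, to get the same conclusion directly. Your argument needs this case distinction to be complete.
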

	\begin{proof}
		By condition \eqref{strictpast}, one can write
		\begin{align}
			\label{rewriting}
			\tilde{B}(t,X_t^x)=g(t,X_S^x) \ t\in[S,S+r_{\tilde{B}}],x\in\Co.
		\end{align}
		If $S>r$, Theorem \ref{StrongFeller} gives
		$$\lim\limits_{y\to x}\E f(X_S^y)=\E f(X_S^x) \ \forall f\in B_b(\Co).$$
		Consequently, combining it with \eqref{XSconv}, \eqref{rewriting} and Theorem \ref{goodconv} gives
		\begin{align}
			\label{dummyconv}
			\lim\limits_{y\to x}\Prob\left(\abs{g(t,X_S^y)-g(t,X_S^x)}>\varepsilon\right)=0 \ \forall\varepsilon>0,t\in[S,S+r_{\tilde{B}}].
		\end{align}
		If $S\leq r$, one can use the continuity assumption \eqref{driftc2} and \eqref{XSconv} to deduce \eqref{dummyconv}, too. Therefore
		$$\lim\limits_{y\to x}\Prob\otimes\lambda_{|[S,S+r_{\tilde{B}}]}\left(\abs{B(\cdot,X_\cdot^y)-B(\cdot,X_\cdot^x)}^2>\varepsilon\right)=0 \ \forall\varepsilon>0.$$
		It follows
		$$\lim\limits_{y\to x}\Prob\otimes\lambda_{|[S,S+r_{\tilde{B}}]}\left(\1_{\tau_{R}^{x,y}\leq \cdot}\abs{B(\cdot,X_\cdot^y)-B(\cdot,X_\cdot^x)}^2>\varepsilon\right)=0 \ \forall\varepsilon>0.$$
		Now, one can use Lemma \ref{Krylov} and condition \eqref{driftc2} to obtain
		$$\sup\limits_{y\in\Co}\E\int_{S\wedge\tau_{R}^{x,y}}^{(S+r_{\tilde{B}})\wedge\tau_{R}^{x,y}}H\left(\abs{B(t,X_t^y)}^2\right)\dd{t}<\infty,$$
		which guarantees the uniform integrability of $\left\{\1_{\tau_{R}^{y,y}<\cdot}\abs{B(\cdot,M^y_\cdot)}^2:y\in\Co\right\}$ with respect to the measure $\Prob\otimes\lambda_{|[S,S+r_{\tilde{B}}]}$.
	\end{proof}
	\begin{proof}[Proof of Theorem \ref{stability}]
		Choose $\delta>0$ like before with the additional restraint $\delta<r_{\tilde{B}}$. By induction and Lemma \eqref{expsupX}, it suffices to prove for every $0\leq S\leq T\leq T_0$ with $T-S\leq\delta$ the implication
		\begin{align*}
			&\lim\limits_{y\to x}\E\norm{X_S^y-X_S^x}_\infty^\gamma=0 \ \forall x\in\Co,0<\gamma<2\\
			\implies&\lim\limits_{y\to x}\Prob\left(\sup\limits_{s\in[S,T]}\abs{X^y(s)-X^x(s)}>\varepsilon\right)=0 \ \forall \varepsilon>0,x\in\Co.
		\end{align*}
		For the sake of simplicity, we write $u(\cdot):=u(\cdot;T)$. Furthermore, define
		\begin{align*}
			Y^x(t)&:=u(t,X(t)), \ S\leq t\leq T,\\
			Y^y(t)&:=u(t,X^y(t)), \ S\leq t\leq T.
		\end{align*}
		By the choice of $\delta$, one has for the difference processes $Z(t):=X^x(t)-X^y(t)$ and $\tilde{Z}(t):=Y^x(t)-Y^y(t)$
		$$\frac{1}{2}\abs{\tilde{Z}(t)}\leq\abs{Z(t)}\leq\frac{3}{2}\abs{\tilde{Z}(t)}, \ S\leq t\leq T.$$
		Due to Lemma \ref{expfX}, Lemma \ref{Ito} is applicable, which gives
		\begin{align*}
			\tilde{Z}(t)=&\int_{S}^{t}\left(Du(s,X^x(s))\tilde{B}(s,X^x_s)-D u(s,X^y(s))\tilde{B}(s,X^y_s)\right)\dd{s}\\
			&+\int_{S}^{t}\left(Du(s,X^x(s))\sigma(s,X^x(s))-D u(s,X^y(s))\sigma(s,X^y(s))\right)\dd{W}(s)
		\end{align*}
		and consequently
		\begin{align*}
			&\dd{\abs{\tilde{Z}}^2}(t)\\
			=&2\tilde{Z}(t)^\top\left(Du(t,X^x(t))\tilde{B}(t,X^x_t)-Du(t,X^y(t))\tilde{B}(t,X^y_t)\right)\dd{t}\\
			&+2\tilde{Z}(t)^\top\left(Du(t,X^x(t))\sigma(t,X^x(t))-Du(t,X^y(t))\sigma(t,X^y(t))\right)\dd{W}(t)\\
			&+\norm{Du(t,X^x(t))\sigma(t,X^x(t))-Du(t,X^y(t))\sigma(t,X^y(t))}_{HS}^2\dd{t}
		\end{align*}
		Using the boundedness of $Du$ and Young's inequality gives for $S\leq t_1\leq t_2\leq T$
		\begin{align*}
			&\abs{\tilde{Z}(t_2)}^2-\abs{\tilde{Z}(t_1)}^2\\
			\leq&c\int_{t_1}^{t_2}\abs{\tilde{Z}(s)}^2\dd{s}\\
			+&c\int_{t_1}^{t_2}\abs{\tilde{B}(s,X_s^x)-\tilde{B}(s,X_s^y)}^2\dd{s}\\
			+&c\int_{t_1}^{t_2}\abs{\tilde{Z}(s)}\norm{Du(s,X^x(s))-Du(s,X^y(s))}_{op}\abs{\tilde{B}(s,X^x_s)}\dd{s}\\
			+&c\int_{t_1}^{t_2}\tilde{Z}(s)^\top\left(Du(s,X^x(s))\sigma(s,X^x(s))-Du(s,X^y(s))\sigma(s,X^y(s))\right)\dd{W}(s)\\
			+&c\int_{t_1}^{t_2}\norm{Du(s,X^x(s))\sigma(s,X^x(s))-Du(s,X^y(s))\sigma(s,X^y(s))}_{HS}^2\dd{s}
		\end{align*}
		where $c>0$ is a constant. Like before, we one can use the multiplier $\exp\left(-A(t)\right)$ where
		\begin{align*}
			A(t)&:=c\int_{S}^{t}\abs{B(s,X^x_s)}\frac{\norm{Du(s,X^x(s))-Du(s,X^y(s))}_{op}}{\abs{\tilde{Z}(s)}}\1_{\tilde{Z}(s)\neq0}\dd{s}\\
			&+c\int_{S}^{t}\frac{\norm{Du(s,X^x(s))\sigma(s,X^x(s))-Du(s,X^y(s))\sigma(s,X^y(s))}_{HS}^2}{\abs{\tilde{Z}(s)}^2}\1_{\tilde{Z}(s)\neq0}\dd{s}
		\end{align*}
		for $S\leq t\leq T$.
		Again, one has
		$$\E \exp\left(\frac{1}{2}A(T)\right)\leq \hat{C}$$
		where $\hat{C}$ is some constant not depending on $x,y\in\Co$. By the It\=o formula, it holds for $S\leq t\leq T$
		\begin{align*}
			e^{-A(t)}\abs{\tilde{Z}(t)}^2\leq&\abs{\tilde{Z}(S)}^2+c\int_{S}^{t}\abs{\tilde{B}(s,X_s^x)-\tilde{B}(s,X_s^y)}^2\dd{s}\\
			&+c\int_{S}^{t}e^{-A(s)}\abs{\tilde{Z}(s)}^2\dd{s}+\text{local martingale}.
		\end{align*}
		Applying the stochastic Gronwall Lemma \ref{Gronwall} gives
		$$\E\left[\sup_{t\in[S,T]}e^{-\frac{1}{2}A(t)}\abs{\tilde{Z}(t)}\right]\leq \tilde{C}\E\abs{\tilde{Z}(S)}+\tilde{C}\E\left(\int_{S}^{T}\abs{\tilde{B}(s,X_s^x)-\tilde{B}(s,X_s^y)}^2\dd{s}\right)^{\frac{1}{2}}$$
		for a constant $\tilde{C}$ which does not depend on $x,y\in\Co$. By Lemma \ref{squareintX},
		$$\lim\limits_{R\to\infty}\sup\limits_{z\in\Co,\norm{z}_\infty\leq2\norm{x}_\infty}\Prob\left(\sup\limits_{-r\leq t\leq T}\abs{X^x(t)}^2>R\right)=0.$$
		holds. Thus, applying Lemma \ref{BXconv} and the induction hypothesis gives
		$$\lim\limits_{y\to x}\Prob\left(\int_{S}^{T}\abs{\tilde{B}(s,X_s^x)-\tilde{B}(s,X_s^y)}^2\dd{s}>\varepsilon\right)=0 \ \forall\varepsilon>0.$$
		By Corollary \ref{squareintX}, one has
		$$\sup\limits_{z\in\Co,\norm{z}_\infty\leq2\norm{x}_\infty}\E\int_{S}^{T}\abs{\tilde{B}(s,X_s^x)-\tilde{B}(s,X_s^y)}^2\dd{s}<\infty$$
		and consequently,
		$$\lim\limits_{y\to x}\E\left(\int_{S}^{T}\abs{\tilde{B}(s,X_s^x)-\tilde{B}(s,X_s^y)}^2\dd{s}\right)^{\frac{1}{2}}=0.$$
	\end{proof}
	\appendix
	\section{Appendix}
	\begin{Theorem}
		\label{PDE}
		Assume \eqref{sigmac} and $b\in L^p\left([0,T]\times\R^d\right)$ with $p>d+2$. Then for any $T>0$ and $f\in L^p\left([0,T]\times\R^d\right)$, there exists a unique solution $u\in W_p^{1,2}\left([0,T]\times\R^d\right)$ of the following PDE
		\begin{align*}
		\partial_tu(t,x)+L_tu(t,x)+f(t,x)&=0,\\
		u(T,x)&=0
		\end{align*}
		with the bound
		$$\norm{u}_{W_p^{1,2}\left([S,T]\times\R^d\right)}\leq C\norm{f}_{L^p\left([S,T]\times\R^d\right)}$$
		for any $S\in[0,T]$ and some constant $C=C(T,C_\sigma,p,\norm{b}_{L^p\left([0,T]\times\R^d\right)})>0$.
	\end{Theorem}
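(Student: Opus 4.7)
The plan is to reduce to the drift-free parabolic $L^p$ theory and then handle the singular first-order term $b\cdot\nabla u$ by a fixed-point argument on a short time interval, iterating backwards from $T$ to $0$. Write $L_t = L_t^{(0)} + b(t,\cdot)\cdot\nabla$ where
$$L_t^{(0)}v(t,x):=\tfrac12\sum_{i,j,k=1}^{d}\sigma^{i,k}(t,x)\sigma^{j,k}(t,x)\partial_i\partial_jv(t,x),$$
so that the coefficients of $L_t^{(0)}$ are bounded and non-degenerate by \eqref{sigmac}, while the singular part is absorbed into the source.

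First I would invoke the classical parabolic $L^p$ theory (Calder\'on--Zygmund / Krylov) for the \emph{drift-free} backward Cauchy problem: for every $g\in L^p([S,T]\times\R^d)$ there is a unique $v\in W_p^{1,2}([S,T]\times\R^d)$ solving $\partial_tv+L_t^{(0)}v+g=0$, $v(T,\cdot)=0$, with
$$\norm{v}_{W_p^{1,2}([S,T]\times\R^d)}\leq C_0\norm{g}_{L^p([S,T]\times\R^d)},$$
where $C_0$ depends only on $T$, $p$, $d$ and $C_\sigma$. This is the only heavy analytic input and is by now standard for bounded, measurable, uniformly elliptic second-order coefficients (this is what underlies Lemma~\ref{Krylov}, cf.\ Zhang~\cite{Zhang2011}).

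Next I would solve the full equation on a short interval $[S,T]$ by contraction. Define the map $\Phi\colon W_p^{1,2}([S,T]\times\R^d)\to W_p^{1,2}([S,T]\times\R^d)$ by sending $v$ to the unique solution $u$ of the drift-free equation with source $g=f+b\cdot\nabla v$ and terminal value $u(T,\cdot)=0$. Using H\"older and the parabolic Morrey embedding $W_p^{1,2}\hookrightarrow L^\infty_tC^{0,\alpha}_x$, valid because $p>d+2$, one gets
$$\norm{b\cdot\nabla v}_{L^p([S,T]\times\R^d)}\leq\norm{b}_{L^p([S,T]\times\R^d)}\norm{\nabla v}_{L^\infty}\leq C_1\norm{b}_{L^p([S,T]\times\R^d)}\norm{v}_{W_p^{1,2}([S,T]\times\R^d)}.$$
Since $b\in L^p([0,T]\times\R^d)$, by absolute continuity of the $L^p$ norm we have $\norm{b}_{L^p([S,T]\times\R^d)}\to0$ as $T-S\to0$. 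Therefore for $T-S\leq\delta$ with $\delta$ small (depending only on $\norm{b}_{L^p}$, $C_\sigma$, $p$, $d$), $\Phi$ is a strict contraction, yielding existence and uniqueness of $u$ on $[S,T]$ together with the estimate
$$\norm{u}_{W_p^{1,2}([S,T]\times\R^d)}\leq 2C_0\norm{f}_{L^p([S,T]\times\R^d)}.$$

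Finally I would globalize by slicing $[0,T]$ into finitely many subintervals of length $\leq\delta$ and solving backwards: on $[T-\delta,T]$ with terminal data $0$, then on $[T-2\delta,T-\delta]$ with terminal data $u(T-\delta,\cdot)$ and modified source, etc. Patching the pieces gives a $u\in W_p^{1,2}([0,T]\times\R^d)$ solving the PDE; uniqueness follows on each slab from the contraction. The bound on an arbitrary subinterval $[S,T]$ comes by restarting the same construction from $S$. The main obstacle is the short-time contraction, which hinges on the parabolic Morrey embedding for $p>d+2$; this is exactly the place where the subcritical integrability hypothesis on $b$ is used.
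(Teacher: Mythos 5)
The paper's own ``proof'' is a one-line citation of Theorem~10.3 in \cite{KrylovRoeckner}, so there is nothing to compare to line by line, but your decomposition--contraction--patching outline is indeed the standard route to that theorem and is the right strategy in spirit.

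There is, however, a genuine gap in the contraction step. You bound $\norm{b\cdot\nabla v}_{L^p([S,T]\times\R^d)}$ by $\norm{b}_{L^p([S,T]\times\R^d)}\,\norm{\nabla v}_{L^\infty}$ and then invoke the parabolic Morrey embedding to control $\norm{\nabla v}_{L^\infty}$ by $C_1\norm{v}_{W_p^{1,2}([S,T]\times\R^d)}$. But that Morrey constant is \emph{not} uniform as the interval length $\ell:=T-S$ shrinks: the relevant spatial estimate (the second display of Theorem~\ref{embedding}(2)) carries the factor $\ell^{-1/p}$. The contraction factor you obtain is therefore of order $\ell^{-1/p}\norm{b}_{L^p([S,T]\times\R^d)}$, and absolute continuity of the $L^p$-norm does \emph{not} make this small. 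For instance, if $b(t,x)=b_0(x)$ is time-independent then $\norm{b}_{L^p([S,T]\times\R^d)}=\ell^{1/p}\norm{b_0}_{L^p(\R^d)}$, so the product equals $\norm{b_0}_{L^p(\R^d)}$ for every $\ell$, and $\Phi$ is not a contraction for large $b_0$. The smallness of $\ell$ by itself does not help.

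The fix is to exploit the terminal condition $v(T,\cdot)=0$, which forces $\nabla v(T,\cdot)\equiv 0$. Restrict $\Phi$ to the closed subspace $\left\{v\in W_p^{1,2}([S,T]\times\R^d):v(T,\cdot)=0\right\}$ (into which $\Phi$ maps by construction), and use the \emph{time}--H\"older part of Theorem~\ref{embedding}(2), whose constant $N(p,\varepsilon)$ is independent of the interval length. Since $\nabla v(T,\cdot)=0$, this yields
$$\norm{\nabla v}_{L^\infty([S,T]\times\R^d)}\leq N\,\ell^{\varepsilon/2}\norm{v}_{W_p^{1,2}([S,T]\times\R^d)}$$
for $0<\varepsilon<1-\tfrac{d+2}{p}$, so the contraction factor becomes $O\left(\ell^{\varepsilon/2}\norm{b}_{L^p([S,T]\times\R^d)}\right)\to 0$ as $\ell\to 0$, which closes the argument. (Equivalently, the drift-free kernel bound $\norm{\nabla u}_{L^\infty}\lesssim\ell^{\frac12-\frac{d+2}{2p}}\norm{g}_{L^p}$ for solutions with zero terminal data achieves the same.) Once this is in place, the globalization is fine, provided you also say explicitly that the nonzero terminal traces $u(t_i,\cdot)$ arising at each slab are controlled through the trace embedding $W_p^{1,2}\hookrightarrow C\big([S,T];W^{2-2/p,p}\big)$; the final constant then grows geometrically in the number of slabs but is of the stated form $C(T,C_\sigma,p,\norm{b}_{L^p})$.
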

	\begin{proof}
		See Theorem 10.3 in \cite{KrylovRoeckner}.
	\end{proof}
	\begin{Theorem}
		\label{embedding}
		Let $p\in(1,\infty)$, $T>0$ and $u\in W_p^{1,2}\left([0,T]\times\R^d\right)$.
		\begin{enumerate}
			\item If $p>\frac{d+2}{2}$, then $u$ is a bounded H\"older continuous function on $[0,T]\times\R^d$ and for any $0<\varepsilon$, $\delta\leq1$ satisfying
			$$\varepsilon+\frac{d+2}{p}<2, \ \ 2\delta+\frac{d+2}{p}<2,$$
			there exists a constant $N=N(p,\varepsilon,\delta)$ such that
			\begin{align*}
			\abs{u(t,x)-u(s,x)}&\leq N\abs{t-s}^\delta\norm{u}^{1-\frac{1}{q}-\delta}_{L^p\left(T;\mathds{W}^{2,p}\left(\R^d\right)\right)}\norm{\partial_tu}^{\frac{1}{p}+\delta}_{L^p\left([0,T]\times\R^d\right)},\\
			\abs{u(t,x)}+\frac{\abs{u(t,x)-u(t,y)}}{\abs{x-y}^\varepsilon}&\leq NT^{-\frac{1}{p}}\left(\norm{u}_{L^p\left(T;\mathds{W}^{2,p}\left(\R^d\right)\right)}+T\norm{\partial_tu}_{L^p\left([0,T]\times\R^d\right)}\right)
			\end{align*}
			for all $s,t\in[0,T]$ and $x,y\in\R^d,x\neq y$.
			\item If $p>d+2$, then $\nabla u$ is a bounded H\"older continuous function on $[0,T]\times\R^d$ and for any $\varepsilon\in(0,1)$ satisfying
			$$\varepsilon+\frac{d+2}{p}<1,$$
			there exists a constant $N=N(p,\varepsilon)$ such that
			\begin{align*}
			\abs{\nabla u(t,x)-\nabla u(s,x)}&\leq N\abs{t-s}^\delta\norm{ u}^{1-\frac{1}{p}-\frac{\varepsilon}{2}}_{L^p\left(T;\mathds{W}^{2,p}\left(\R^d\right)\right)}\norm{\partial_tu}^{\frac{1}{p}+\frac{\varepsilon}{2}}_{L^p\left([0,T]\times\R^d\right)},\\
			\abs{\nabla u(t,x)}+\frac{\abs{\nabla u(t,x)-\nabla u(t,y)}}{\abs{x-y}^\varepsilon}&\leq NT^{-\frac{1}{p}}\left(\norm{u}_{L^p\left(T;\mathds{W}^{2,p}\left(\R^d\right)\right)}+T\norm{\partial_tu}_{L^p\left([0,T]\times\R^d\right)}\right)
			\end{align*}
			for all $s,t\in[0,T]$ and $x,y\in\R^d,x\neq y$.
		\end{enumerate}
	\end{Theorem}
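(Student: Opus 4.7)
The statement is a parabolic analogue of Morrey's embedding, and my plan is to establish it by the classical mollification-plus-interpolation scheme.

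For part~1, the point of departure is that $u(t,\cdot) \in W^{2,p}(\R^d)$ for a.e.\ $t$ while $\partial_t u \in L^p$. To combine these into an \emph{everywhere}-in-$t$ H\"older bound, I would pick a standard $x$-mollifier $\rho_\eta$ and write $u = u^\eta + (u-u^\eta)$ with $u^\eta(t,x) := (u(t,\cdot) * \rho_\eta)(x)$. The defect $u - u^\eta$ is controlled pointwise in time by $\eta^{2-d/p}\norm{u(t,\cdot)}_{W^{2,p}}$ via the Sobolev embedding $W^{2,p} \hookrightarrow C^{0,2-d/p}$, which is available because $p > (d+2)/2$. The smoothed term $u^\eta$ is controlled in time by applying Young's inequality to $\partial_t u^\eta = (\partial_t u) * \rho_\eta$, giving $\abs{u^\eta(t,x) - u^\eta(s,x)} \le C \eta^{-d/p'} \abs{t-s}^{1-1/p}\norm{\partial_t u}_{L^p}$. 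Choosing $\eta$ as an appropriate power of $\abs{t-s}$ balances the two errors and produces exactly the stated H\"older exponent $\delta$ (with $2\delta + (d+2)/p < 2$) and the interpolated constants $\norm{u}^{1-1/p-\delta}_{L^p(W^{2,p})}\norm{\partial_t u}^{1/p+\delta}_{L^p}$. The spatial H\"older bound follows from the same scheme, or more directly from the spatial Sobolev embedding once uniform-in-$t$ control is in place.

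For part~2 the same strategy is applied with $\nabla u$ in place of $u$, using the Sobolev embedding $W^{1,p} \hookrightarrow C^{0,1-d/p}$ available when $p > d+2$. The key complication is that $\partial_t \nabla u$ is \emph{not} controlled in $L^p$, so all mollification has to be carried out on $u$ itself (not on $\nabla u$); one then differentiates the smoothed object, $\nabla u^\eta = u * \nabla \rho_\eta$, which contributes one extra factor $\eta^{-1}$ in the temporal bound and forces the stricter condition $\varepsilon + (d+2)/p < 1$. As usual the inequalities are proved first on $C_c^\infty$ and extended by density of smooth compactly supported functions in $W_p^{1,2}$.

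The hardest part is precisely the part~2 bookkeeping, because the naive attempt to apply part~1 to $\nabla u$ fails for lack of $\partial_t \nabla u$-control. Everything else is a matter of tracking the Young/H\"older exponents produced by the mollification scheme. A cleaner alternative, consistent with how Theorem~\ref{PDE} is handled elsewhere in this paper, is to cite the corresponding chapter of Krylov's \emph{Lectures on Elliptic and Parabolic Equations in Sobolev Spaces}, where this precise embedding is proved at exactly this level of generality.
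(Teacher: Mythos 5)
The paper does not actually prove this theorem; the entire proof is a citation to \cite{Fedrizzi}, so your closing remark that one may simply cite a reference at this level of generality is in fact exactly what the author does.

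That said, the direct proof you sketch has a genuine gap worth naming. The decomposition $u = u^\eta + (u - u^\eta)$ and the idea of balancing a spatial smoothing error against a temporal gain from Young's inequality is the right instinct, but your bound on the defect, $\abs{u(t,x)-u^\eta(t,x)}\lesssim\eta^{2-d/p}\norm{u(t,\cdot)}_{W^{2,p}}$, involves the fixed-time-slice norm $\norm{u(t,\cdot)}_{W^{2,p}}$. Membership in $W_p^{1,2}\left([0,T]\times\R^d\right)$ only controls that quantity in $L^p_t$, not in $L^\infty_t$, so after you set $\eta=\abs{t-s}^\theta$ and balance, the right-hand side still contains the uncontrolled $\norm{u(t,\cdot)}_{W^{2,p}}+\norm{u(s,\cdot)}_{W^{2,p}}$ rather than the global $\norm{u}_{L^p(T;\mathds{W}^{2,p})}$ appearing in the claimed inequality. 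The standard repair is to mollify jointly in $(t,x)$ with parabolic scaling $(\eta^2,\eta)$, or equivalently to subtract the mean of $u$ over the parabolic cylinder $Q_\eta(t,x)$ rather than over the spatial ball $B_\eta(x)$; then both pieces of the decomposition are controlled by $L^p$ norms over the cylinder, and the scalar balancing does yield the stated exponents. A minor slip on the way: $\norm{\rho_\eta}_{L^{p'}}\sim\eta^{-d/p}$, not $\eta^{-d/p'}$. The same caveat applies to your discussion of part~2: your observation that one must mollify $u$ and then differentiate, rather than apply part~1 to $\nabla u$, is the right diagnosis, but the bound you would need on $\nabla u-\nabla u^\eta$ again involves a time-slice norm that is only $L^p$ in $t$, so the parabolic cylinder averaging is needed there as well.
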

	\begin{proof}
		See \cite[p. 22, 23, 36]{Fedrizzi}.
	\end{proof}
	In the next lemma we identify every $u\in W_p^{1,2}$ with its regular version.
	\begin{Lemma}[It\=o formula for $W_p^{1,2}$-functions]
		\label{Ito}
		Let $T>0$, $p>d+2$. Let $X:\Omega\times[0,T]\to\R^d$ be a semimartingale on some filtrated probability space $\left(\Omega,\mathcal{F},\Prob,\left(\mathcal{F}_t\right)_{t\geq0}\right)$ of the form
		$$\dd{X}(t)=b(t)\dd{t}+\sigma(t)\dd{W}(t)$$
		where $W$ is a $d$-dimensional Brownian motion, $b:\Omega\times[0,T]\to\R^d$ and $\sigma:\Omega\times[0,T]\to\R^{d\times d}$ are progressively measurable with
		$$\Prob\left(\norm{b}_{L^1[0,T]}+\norm{a^{i,j}}_{L^\delta[0,T]}<\infty\right)=1, \ i,j=1,\dots,d$$
		for some $1<\delta\leq\infty$ where $a:=\sigma\sigma^\top$. Furthermore, assume that there exists a constant $C>0$ with
		$$\E\int_{0}^{T}f(t,X(t))\dd{t}\leq C\norm{f}_{L^{p/\delta*}\left([0,T]\times\R^d\right)}$$
		for all $f\in L^{p/\delta^*}\left([0,T]\times\R^d\right)$ where $\delta^*$ denotes the conjugate exponent of $\delta$.
		Then for any $u\in W_p^{1,2}\left([0,T]\times\R^d\right)$, the It\=o formula holds, i.e.
		\begin{align*}
			u(t,X(t))-u(0,X(0))=&\int_{0}^{t}\partial_tu(s,X(s))\dd{s}+\int_{0}^{t}\nabla u(s,X(s))^\top b(s)\dd{s}\\
			&+\int_{0}^{t}\nabla u(s,X(s))^\top\sigma(s)\dd{W}(s)\\
			&+\frac{1}{2}\sum_{i,j=1}^{d}\int_{0}^{t}\partial_i\partial_j u(s,X(s))a^{i,j}(s)\dd{s}.
		\end{align*}
	\end{Lemma}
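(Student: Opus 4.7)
The plan is the standard mollification argument. By the very definition of $W_p^{1,2}\left([0,T]\times\R^d\right)$ as the closure of $C_c^\infty$, I pick $u_n\in C_c^\infty\left([0,T]\times\R^d\right)$ with $u_n\to u$ in $W_p^{1,2}$, apply the classical It\=o formula to each $u_n$, and pass to the limit. Since $p>d+2$, Theorem~\ref{embedding}(2) yields $u_n\to u$ and $\nabla u_n\to\nabla u$ uniformly on $[0,T]\times\R^d$; this handles the boundary term, the drift integral $\int_0^t\nabla u_n^\top b\,\dd{s}$ (using $b\in L^1[0,T]$ a.s.), and the stochastic integral (using $\operatorname{tr}a\in L^\delta[0,T]\subset L^1[0,T]$ a.s., since $\delta>1$). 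The delicate terms are the two PDE-type integrals $\int_0^t\partial_tu_n(s,X(s))\,\dd{s}$ and $\tfrac{1}{2}\sum_{i,j}\int_0^t\partial_i\partial_ju_n(s,X(s))\,a^{i,j}(s)\,\dd{s}$, where $\partial_tu$ and $\partial_i\partial_ju$ live only in $L^p$.

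To cope with the merely a.s.~integrability of $b$ and $a$, I localize via
$$\tau_R:=\inf\left\{t\geq0:\abs{X(t)}\geq R\text{ or }\norm{b}_{L^1[0,t]}\geq R\text{ or }\max_{i,j}\norm{a^{i,j}}_{L^\delta[0,t]}\geq R\right\}\wedge T,$$
so that $\tau_R\uparrow T$ almost surely; it then suffices to prove the formula on $[0,\tau_R\wedge t]$ and let $R\to\infty$. Classical It\=o applied to $u_n$ on this interval yields the identity for $u_n$, and the task reduces to showing each error term (after subtracting the target identity for $u$) tends to zero in probability as $n\to\infty$.

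The Krylov-type hypothesis enters in the two singular terms. For $\partial_tu$, plugging $f=\1_{B_R}\abs{\partial_tu_n-\partial_tu}$ into the hypothesis gives
$$\E\int_0^{\tau_R\wedge t}\abs{\partial_tu_n-\partial_tu}(s,X(s))\,\dd{s}\leq C\norm{\partial_tu_n-\partial_tu}_{L^{p/\delta^*}([0,T]\times B_R)}\to 0,$$
via the embedding $L^p\hookrightarrow L^{p/\delta^*}$ on the bounded set $[0,T]\times B_R$ (valid since $\delta^*\geq 1$). For the second-order term I apply H\"older in $\Prob\otimes\dd{s}$ with conjugate exponents $(\delta,\delta^*)$ to split
\begin{align*}
&\E\int_0^{\tau_R\wedge t}\abs{a^{i,j}}\abs{\partial_i\partial_ju_n-\partial_i\partial_ju}(s,X(s))\,\dd{s}\\
&\qquad\leq R\left(\E\int_0^{\tau_R\wedge t}\abs{\partial_i\partial_ju_n-\partial_i\partial_ju}^{\delta^*}(s,X(s))\,\dd{s}\right)^{1/\delta^*},
\end{align*}
and the Krylov estimate, applied to the nonnegative $L^{p/\delta^*}$-function $\1_{B_R}\abs{\partial_i\partial_ju_n-\partial_i\partial_ju}^{\delta^*}$, bounds the second factor by $C\norm{\partial_i\partial_ju_n-\partial_i\partial_ju}_{L^p([0,T]\times B_R)}^{\delta^*}\to 0$. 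This is exactly the coupling between the exponent $\delta$ on the $a^{i,j}$-factor and the exponent $p/\delta^*$ in the Krylov norm that the hypothesis is tailored to allow.

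The main obstacle is this second-order term: the peculiar exponent $p/\delta^*$ in the Krylov hypothesis exists precisely to make the H\"older/Krylov pairing close, and the localization by $\tau_R$ is needed because the pathwise bounds on $a^{i,j}$ do not transfer to expectation without it. Once all error terms vanish in probability on $[0,\tau_R\wedge t]$, extracting a subsequence gives a.s.~convergence; sending $R\to\infty$ with $\tau_R\uparrow T$ then completes the proof.
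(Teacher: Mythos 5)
Your proposal is a self-contained version of the standard mollification argument, and as far as I can tell it is carried out correctly; note, however, that the paper itself does not give a proof of Lemma~\ref{Ito} but simply cites \cite{Stefan1}, so there is no in-paper argument to compare against. The essential structure you use---approximate $u$ by $u_n\in C_c^\infty$ in $W_p^{1,2}$, apply the classical It\=o formula to $u_n$, localize via $\tau_R$ to control the pathwise $L^1$/$L^\delta$ bounds on $b$ and $a$, handle the boundary/drift/stochastic-integral terms by the uniform convergence of $u_n$ and $\nabla u_n$ coming from the embedding (Theorem~\ref{embedding}, needing $p>d+2$), handle the $\partial_t u$ term directly by the Krylov hypothesis, and handle the $\partial_i\partial_j u\cdot a^{i,j}$ term by a H\"older pairing in $\Prob\otimes\dd{s}$ with exponents $(\delta,\delta^*)$ followed by Krylov applied to $\1_{B_R}\abs{\partial_i\partial_j u_n-\partial_i\partial_j u}^{\delta^*}$---is exactly the mechanism the exponent $p/\delta^*$ in the hypothesis is designed for, and each step checks out. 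Two small remarks for polish: the localization needs only that the stopped integrals agree with the unstopped ones up to a Lebesgue-null set (since $\abs{X(\tau_R)}$ may equal $R$), which is automatic; and both items (1) and (2) of Theorem~\ref{embedding} are being used (item (1) for $\norm{u_n-u}_\infty\to0$, item (2) for $\norm{\nabla u_n-\nabla u}_\infty\to0$), not just item (2) as written.
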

	\begin{proof}
		See \cite{Stefan1}.
	\end{proof}
	Let $\phi$ be a locally integrable function on $\R^d$. The Hardy-Littlewood maximal function is defined by
	$$\mathcal{M}\phi(x):=\sup_{0<r<\infty}\frac{1}{\abs{B_r}}\int_{B_r}\phi(x+y)\dd{y}$$
	where $B_r$ is the Euclidean ball of radius $r$. The following result is cited from Appendix A in \cite{Crippa}.
	\begin{Lemma} \ 
		\label{Hardy-Littlewood}
		\begin{enumerate}
			\item	There exists a constant $C_d>0$ such that for all $\phi\in C^{\infty}\left(\R^d\right)$ and $x,y\in\R^d$,
			$$\abs{\phi(x)-\phi(y)}\leq C_d\abs{x-y}\left(\mathcal{M}\abs{\nabla\phi}(x)+\mathcal{M}\abs{\nabla\phi}(y)\right).$$
			\item For any $p>1$, there exists a constant $C_{d,p}$ such that for all $\phi\in L^p\left(\R^d\right)$,
			$$\norm{\mathcal{M}\phi}_{L^p}\leq C_{d,p}\norm{\phi}_{L^p.}$$
		\end{enumerate}
	\end{Lemma}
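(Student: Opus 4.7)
The plan for part (1) is to exploit an auxiliary pointwise bound
$$\abs{\phi(z) - \phi_{B(z, \rho)}} \leq C_d\,\rho\,\mathcal{M}\abs{\nabla\phi}(z), \quad z \in \R^d,\ \rho > 0,$$
where $\phi_B$ denotes the mean of $\phi$ over the ball $B$. I would establish this by writing $\phi(w) - \phi(z) = \int_0^1 \nabla\phi(z + t(w-z)) \cdot (w-z)\,dt$, averaging over $w \in B(z,\rho)$, passing to polar coordinates, and recognizing the resulting Riesz-type integral $\int_{B(z,\rho)}\abs{\nabla\phi(w)}/\abs{w-z}^{d-1}\,dw$ as a sum over dyadic shells $B(z, \rho/2^k)\setminus B(z, \rho/2^{k+1})$. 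On each shell, the average of $\abs{\nabla\phi}$ is dominated by $\mathcal{M}\abs{\nabla\phi}(z)$, and the geometric series over the shells converges. With this intermediate estimate in hand, I would set $r := \abs{x-y}$ and pick a ball $B$ containing both $x$ and $y$, for instance $B := B(x, 2r)$. Applying the estimate at $z = x$ and at $z = y$ with radii of order $r$, together with the triangle inequality and a short comparison between averages over the two overlapping balls of comparable size, yields the claimed bound.

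For part (2), the plan is the classical Marcinkiewicz interpolation. I would first prove the weak-type $(1,1)$ inequality via Vitali's covering lemma: for any $\lambda > 0$ and $x \in E_\lambda := \{\mathcal{M}\phi > \lambda\}$ there exists a ball $B_x$ centred at $x$ with $\int_{B_x}\abs{\phi} > \lambda\abs{B_x}$, and a Vitali extraction produces a pairwise disjoint subfamily $\{B_{x_k}\}$ with $E_\lambda \subset \bigcup_{k} 5B_{x_k}$. Summing over $k$ then gives $\abs{E_\lambda} \leq 5^d \lambda^{-1}\norm{\phi}_{L^1}$. Combined with the trivial $L^\infty$ bound $\norm{\mathcal{M}\phi}_{L^\infty} \leq \norm{\phi}_{L^\infty}$, Marcinkiewicz interpolation delivers boundedness of $\mathcal{M}$ on $L^p$ for every $1 < p < \infty$.

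The main obstacle I anticipate is in part (1): identifying the common ball that captures both the $\mathcal{M}\abs{\nabla\phi}(x)$ and the $\mathcal{M}\abs{\nabla\phi}(y)$ contributions, and handling the comparison between averages over two related balls of comparable size through a local Poincaré-type argument, again reduced to $\mathcal{M}\abs{\nabla\phi}$. Part (2) is standard once the Vitali covering lemma and the Marcinkiewicz theorem are granted, and the two items are essentially independent, with (1) providing the pointwise inequality and (2) the functional estimate that are combined in the applications to stability and pathwise uniqueness above.
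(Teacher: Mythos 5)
The paper does not prove this lemma at all: it simply cites it from Appendix A of Crippa's thesis \cite{Crippa}, so there is no in-paper argument against which to compare your sketch. That said, your sketch is a correct outline of the standard textbook proof. For (1), the Morrey--Sobolev pointwise estimate via the representation $\phi(w)-\phi(z)=\int_0^1\nabla\phi(z+t(w-z))\cdot(w-z)\,\dd{t}$, averaging over a ball, passing to the Riesz-type integral $\int_{B(z,\rho)}\abs{\nabla\phi(w)}\,\abs{w-z}^{1-d}\,\dd{w}$, and summing over dyadic shells is exactly the classical route; your concern about reconciling the two averages is the right one, and the easy fix is to take a single ball $B=B\bigl(\tfrac{x+y}{2},\abs{x-y}\bigr)$ and control both $\abs{\phi(x)-\phi_B}$ and $\abs{\phi(y)-\phi_B}$ by the same Riesz-potential argument with the maximal function evaluated at $x$ and at $y$ respectively (noting that $B\subset B(x,2\abs{x-y})\cap B(y,2\abs{x-y})$), so no separate comparison of ball averages is needed. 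For (2), the weak-$(1,1)$ bound via Vitali's covering lemma followed by Marcinkiewicz interpolation with the trivial $L^\infty$ bound is precisely the Hardy--Littlewood--Wiener theorem, and your outline is correct; the only routine caveat is to run the covering argument on compact subsets of $E_\lambda$ (or use a finite Vitali-type selection) to sidestep measurability and infinite-family issues. In short: a sound sketch of a standard result, where the paper itself chose to cite rather than prove.
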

	For a real-valued process denote $Y^*(t):=\sup\limits_{0\leq s\leq t}Y(s)$.
	\begin{Lemma}
		\label{Gronwall}
		Let $Z$ and $H$ be nonnegative, adapted processes with continuous paths and assume that $\psi$ is nonnegative and progressively measurable. Let $M$ be a continuous local martingale starting at 0. If
		$$Z(t)\leq\int_{0}^{t}\psi(s)Z(s)\dd{s}+M(t)+H(t)$$
		holds for all $t\geq0$, then for $p\in(0,1)$ and $\mu,\nu>1$ such that $\frac{1}{\mu}+\frac{1}{\nu}=1$ and $p\nu<1$, we have
		$$\E\sup\limits_{0\leq s\leq t}Z(s)^p\leq(c_{p\nu}+1)^{1/\nu}\left(\E\exp\left\{p\mu\int_{0}^{t}\psi(s)\dd{s}\right\}\right)^{1/\mu}\left(\E(H^*(t))^{p\nu}\right)^{1/\nu}$$
		where
		$$c_p:=\left(4\wedge\frac{1}{p}\right)\frac{\pi p}{\sin(\pi p)}.$$
		If $\psi$ is deterministic, then
		$$\E\sup\limits_{0\leq s\leq t}Z(s)^p\leq(1+c_p)\exp\left\{p\int_{0}^{t}\psi(s)\dd{s}\right\}\left(\E(H^*(t))^p\right)$$
		and
		$$\E Z(t)\leq\exp\left\{\int_{0}^{t}\psi(s)\dd{s}\right\}\E H^*(t).$$
	\end{Lemma}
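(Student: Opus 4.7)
The proof follows the classical strategy for stochastic Gronwall inequalities (cf.\ Scheutzow): combine a pathwise application of the deterministic Gronwall lemma with Lenglart's domination inequality in order to absorb the local-martingale term into an $H^*$-contribution on the right-hand side.

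First I would reduce to integrable data by a standard localization: stopping times $\tau_n \uparrow \infty$ chosen so that $M$ is a bounded true martingale on $[0,\tau_n]$ and $Z$, $H$, $A(t) := \int_0^t \psi(s)\,ds$ are bounded there; the final statement is recovered by Fatou's lemma. Setting $V(t) := \int_0^t \psi(s) Z(s)\,ds$, the hypothesis reads $Z \leq H + M + V$, hence $V'(s) \leq \psi(s)(H(s) + M(s) + V(s))$. The deterministic Gronwall inequality gives
\[
V(t) \leq \int_0^t \psi(s)\bigl(H(s) + M(s)\bigr)\, e^{A(t) - A(s)}\,ds,
\]
and integration by parts --- using that $s \mapsto e^{A(t) - A(s)}$ is continuous and of finite variation --- rewrites the $M$-contribution as
\[
M(t) + \int_0^t \psi(s) M(s)\, e^{A(t) - A(s)}\,ds = e^{A(t)} N(t), \qquad N(t) := \int_0^t e^{-A(s)}\,dM(s),
\]
which is again a continuous local martingale with $N(0) = 0$. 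Bounding the $H$-contribution crudely by $H^*(t) e^{A(t)}$ yields the clean pointwise estimate
\[
Z(t) \leq e^{A(t)}\bigl(H^*(t) + N(t)\bigr).
\]

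The crucial structural point is that $Z \geq 0$, combined with the above, forces $\tilde X(s) := H^*(s) + N(s) \geq 0$ for all $s$. Since $H^*$ is adapted and nondecreasing and $N$ is a local martingale, $\tilde X$ is a nonnegative continuous local submartingale with Doob--Meyer decomposition given precisely by its increasing part $H^*$ and local-martingale part $N$. Taking suprema gives $Z^*(t) \leq e^{A(t)} \tilde X^*(t)$; raising to the $p$-th power with $p \in (0,1)$ and applying Hölder's inequality with exponents $\mu, \nu$ separates the exponential factor:
\[
\E (Z^*(t))^p \leq \bigl(\E e^{p\mu A(t)}\bigr)^{1/\mu} \bigl(\E (\tilde X^*(t))^{p\nu}\bigr)^{1/\nu}.
\]
Using the subadditive split $(\tilde X^*)^{p\nu} \leq (H^*)^{p\nu} + (N^*)^{p\nu}$ (valid since $p\nu \in (0,1)$) together with Lenglart's domination inequality in the form $\E(N^*)^{p\nu} \leq c_{p\nu} \E(H^*)^{p\nu}$, one produces the claimed factor $(c_{p\nu} + 1)^{1/\nu}$. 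The deterministic-$\psi$ corollaries are immediate: $e^{pA(t)}$ can be pulled out of the expectation, and $\E Z(t) \leq e^{A(t)} \E H^*(t)$ follows by taking expectation of the pointwise bound and using $\E N(t) = 0$.

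The main obstacle is seeing how to eliminate $M$ from the right-hand side: no control on $M^*$ alone is available, so one must rearrange the pathwise inequality so that the martingale integral sits inside a \emph{nonnegative} submartingale, namely $H^* + N$; this is exactly the structure that permits Lenglart's inequality to trade the local martingale for an $H^*$-bound and close the estimate.
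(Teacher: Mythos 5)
The paper does not give a proof of this lemma; it simply cites \cite{Scheutzow}, which is where the result (including the explicit constant $c_p$) is established. Your sketch is essentially a faithful reconstruction of Scheutzow's argument: localize, apply the deterministic Gronwall inequality pathwise, rewrite the martingale contribution via $N(t)=\int_0^t e^{-A(s)}\,dM(s)$ so that $Z(t)\leq e^{A(t)}\bigl(H^*(t)+N(t)\bigr)$, observe that nonnegativity of $Z$ forces $\tilde X:=H^*+N\geq 0$, then split off the exponential by H\"older and use subadditivity of $u\mapsto u^{p\nu}$ to reduce matters to a moment bound on $N^*$. All of this is correct.

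The one place where you gloss over the technical heart is the step ``$\E(N^*)^{p\nu}\leq c_{p\nu}\,\E(H^*)^{p\nu}$ by Lenglart's domination inequality.'' The standard Lenglart inequality concerns a \emph{nonnegative} adapted process dominated in expectation at stopping times by an increasing predictable process; here $N$ is a signed local martingale, so Lenglart does not apply to it directly. What is actually needed is the dedicated estimate that Scheutzow isolates as a separate proposition: if $N$ is a continuous local martingale with $N(0)=0$ and $N(t)\geq -H^*(t)$ pathwise for a continuous nondecreasing adapted $H^*$, then $\E\bigl(\sup_{s\leq t}N(s)\bigr)^q\leq c_q\,\E (H^*(t))^q$ for $q\in(0,1)$, with the explicit $c_q=\bigl(4\wedge\tfrac1q\bigr)\tfrac{\pi q}{\sin(\pi q)}$. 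That proposition is proved by an optional-stopping / good-$\lambda$ argument of Lenglart type (and is where the $\tfrac{\pi q}{\sin\pi q}$ constant comes from), so calling it ``a form of Lenglart domination'' is defensible as shorthand, but it is not an off-the-shelf application of the classical lemma; it is the nontrivial content of the proof. Everything else in your sketch, including the deterministic-$\psi$ corollaries (where you should note $N$ is a supermartingale because it is a local martingale bounded below by the integrable $-H^*$, giving $\E N(t)\leq 0$), goes through as you describe.
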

	\begin{proof}
		See \cite{Scheutzow}.
	\end{proof}
	\bibliographystyle{plain}
	\bibliography{Bibliography}
\end{document}